\tikzset{node distance=2em, ch/.style={circle,draw,on chain,inner sep=2pt},chj/.style={ch,join},every path/.style={shorten >=4pt,shorten <=4pt},line width=1pt,baseline=-1ex}
\newcommand{\mlabel}[1]{%
  \(#1\)
}
\let\dlabel=\alabel
\newcommand{\dnode}[2][chj]{%
\node[#1,label={below:\dlabel{#2}}] {};
}
\newcommand{\dnodebr}[1]{%
\node[chj,label={below right:\dlabel{#1}}] {};
}
\newcommand{\dydots}{%
\node[chj,draw=none,inner sep=1pt] {\dots};
}
\numberwithin{equation}{section}
\newtheorem{theorem}{Theorem}[section]
\newtheorem{lemma}[theorem]{Lemma}
\newtheorem{corollary}[theorem]{Corollary}
\newtheorem{conjecture}[theorem]{Conjecture}
\theoremstyle{definition}
\theoremstyle{remark}
\newcommand{\CC}{\mathbf{C}}
\newcommand{\HH}{\mathbf{H}}
\newcommand{\RR}{\mathbf{R}}
\newcommand{\ZZ}{\mathbf{Z}}
\newcommand{\cA}{\mathcal{A}}
\newcommand{\la}{\langle}
\newcommand{\ra}{\rangle}
\begin{document}
\title{Zero fibers of quaternionic quotient singularities}
\author{Lien Cartaya}
\author{Stephen Griffeth}
\date{}

\begin{abstract} We propose a generalization of Haiman's conjecture on the diagonal coinvariant rings of real reflection groups to the context of irreducible quaternionic reflection groups (also known as symplectic reflection groups). For a reflection group $W$ acting on a quaternionic vector space $V$, by regarding $V$ as a complex vector space we consider the scheme-theoretic fiber over zero of the quotient map $\pi:V \to V/W$. For $W$ an irreducible reflection group of (quaternionic) rank at least $6$, we show that the ring of functions on this fiber admits a $(g+1)^n$-dimensional quotient arising from an irreducible representation of a symplectic reflection algebra, where $g=2N/n$ with $N$ the number of reflections in $W$ and $n=\mathrm{dim}_\HH(V)$, and we conjecture that this holds in general. We observe that in fact the degree of the zero fiber is precisely $g+1$ for the rank one groups (corresponding to the Kleinian singularities). In an appendix, we give a proof that three variants of the Coxeter number, including $g$, are integers. \end{abstract}

\thanks{We thank Nicol\'as Libedinsky, Arun Ram, Vic Reiner, Christian Stump, and Ulrich Thiel for comments on a preliminary version of this article. The second author acknowledges the financial support of Fondecyt Proyecto Regular 1231355.}
\maketitle

\section{Introduction}

 The main purpose of the present paper is to generalize the conjecture of Haiman (see \cite{Hai1}, \cite{Hai2}, \cite{Hai3}, and \cite{Gor}) on the diagonal coinvariant rings of real reflection groups to the class of quaternionic reflection groups. We believe that this is the broadest class of groups (at least in characteristic $0$) for which the phenomenon observed by Haiman persists. As evidence for this generalized conjecture, we prove that it holds for all irreducible quaternionic reflection groups of rank at least $6$. 

\subsection{Historical background} Work on this circle of ideas began with Haiman's foundational papers, where he proved that the dimension of the diagonal coinvariant ring of the symmetric group $S_n$ is $(n+1)^{n-1}$. Haiman's conjecture is that, for an irreducible real reflection group with Coxeter number $h$ and rank $n$, there exists an $(h+1)^n$-dimensional quotient ring of the diagonal coinvariant ring. Gordon proved Haiman's conjecture in \cite{Gor} by exhibiting a certain non-commutative deformation of a quotient of the diagonal coinvariant ring, and together with the second author extended this result to complex reflection groups in \cite{GoGr}. After the appearance of \cite{GoGr}, there was no further progress on the problem of understanding the diagonal coinvariant rings of general reflection groups for many years. But two recent discoveries suggest that there is much more to be learned.

First, in \cite{Gri4}, one of us observed that the lower bound $(h+1)^n$ of \cite{GoGr} may be further improved to $(g+1)^n$, where for an irreducible complex reflection group of rank $n$ containing $N$ reflections the number $g$ introduced in \cite{Gri4} is 
$$g=2N/n.$$ We recall that the \emph{Coxeter number} $h$ is
$$h=\frac{N+N^*}{n},$$ with $N^*$ the number of reflecting hyperplanes, so $g \geq h$ with equality precisely when every reflection in $W$ has order $2$ (which occurs in particular when $W$ is real). 

The second disovery (in \cite{AjGr}) is the observation that a refinement of the strategies employed in \cite{Gor}, \cite{GoGr}, and \cite{Gri4} (see also \cite{Val} and \cite{Gri1})  explains at least some of the difference between the dimension of the diagonal coinvariant ring and the lower bound $(h+1)^n$. These two discoveries together showed that the representation theory of non-commutative deformations of the quotient map $V \to V/W$ knows far more about the structure of this map than we had previously presumed: firstly, because there always exists an irreducible representation $L$ of a suitable deformation of $V \to V/W$ with $\mathrm{dim}(L)$ very close to the dimension of the diagonal coinvariant ring, and secondly because the difference between the dimensions seems to be accounted for by other irreducible representations.  But the reasons for the extent of this knowledge remain mysterious; we know of no \emph{a priori} reason for these phenomena. Our observation here is then that the natural generality for this mysterious connection is the class of quaternionic reflection groups, for which the relevant non-commutative deformations are certain representations of the corresponding symplectic reflection algebra.

\subsection{Tools} In all the papers since \cite{Gor}, the strategy for obtaining these lower bounds is to find an irreducible representation $L$ of the symplectic reflection algebra (a deformation of the ring $\CC[V] \rtimes W$) with the property that the space of $W$-invariants in $L$ is one-dimensional (actually, beginning with Gordon it has been traditional to look for representations in which the determinant representation of $W$ occurs exactly once, but upon twisting by the inverse of the determinant this amounts to the same thing). The dimension of $L$ then gives the desired lower bound (which turns out, in all the cases in which we can explicitly compute the correct answer, to be either correct or mysteriously close to the truth). This is what we will do here, after formalizing this strategy in \ref{non-commutative fibers}. 

The tactics in each case tend to be quite subtle, due to the difficulty of calculating explicit character formulas for finite-dimensional representations of the relevant deformations. In particular, the principal tool applied in \cite{Gor} was the Heckman-Opdam shift functor; in \cite{GoGr} it was Rouquier's theorem \cite{Rou} on the uniqueness of highest weight covers; \cite{Gri4} used Rouquier's theorem together with the work \cite{BeCh} of Berest and Chalykh on quasi-invariants; and \cite{AjGr} used the representation-valued orthogonal polynomials introduced in \cite{Gri2} together with the results of \cite{Gri1} and \cite{FGM}. In this paper, our main technical inputs are Crawley-Boevey and Holland's study \cite{CBH} of the representation theory of what are nowadays called rank one symplectic reflection algebras. In the end, that part of \cite{CBH} that we use boils down to a detailed understanding of how certain reflection functors relate representations of these algebras at different values of the deformation parameter to one another, and so part of what we do here might be thought of as directly analogous to Gordon's reliance on shift functors. In the remainder of this introduction we give precise definitions and statements of our results.

\subsection{The zero fiber ring} Let $F$ be a field, let $V$ be a finite-dimensional $F$-vector space, and let $W \subseteq \mathrm{GL}(V)$ be a finite group. The ring of functions on the scheme $V/W$ is, by definition, the ring $F[V]^W$ of $W$-invariants in the ring $F[V]$ of polynomial functions on $V$, and the inclusion $F[V]^W \hookrightarrow F[V]$ corresponds to the projection $\pi:V \to V/W$. The most interesting fiber of this projection is the fiber $\pi^{-1}(0)$ over $0$, which we call the \emph{zero-fiber}. It is the subscheme of $V$ corresponding to the quotient $F[\pi^{-1}(0)]=F[V]/I$ of $F[V]$, where $I$ is the ideal of $F[V]$ generated by all $f-f(0)$ for $f \in F[V]^W$. When $W$ is a reflection group acting in $V$, it has been traditional to call this ring the \emph{coinvariant ring} of $W$, which conflicts with the use of the terminology \emph{coinvariant} for the largest quotient object on which the group acts trivially. Therefore we take this opportunity to refer to this ring in the more general context of an arbitrary linear group as the \emph{zero-fiber ring}. This terminology also has the advantage of describing somewhat more vividly what the ring actually \emph{is}.

The \emph{degree} $\delta=\delta(W)$ of the zero-fiber is the dimension of the zero-fiber ring. It is a very interesting and subtle invariant of the linear group $W$. When the base field is of characteristic zero, it is not hard to see that we have the bounds
$$d \leq \delta(W) \leq {n+d-1 \choose d-1} \quad \hbox{where $d=|W|$ and $n=\mathrm{dim}(V)$.}$$ These bounds are moreover sharp in the sense that for given $d$ and $n$ there is a group $W$ of order $d$ acting on a vector space of dimension $n$ that achieves each of them. Namely, the lower bound is achieved precisely when $W$ is a reflection group, and the upper bound is achieved exactly when $W$ is a group of scalar matrices. Thus there is nothing more to say without singling out some class of groups with more structure. Here we propose that a particularly interesting class of groups to study is the class of \emph{quaternionic reflection groups}. See, for instance, \cite{McK3} (finite subgroups of the quaternions) and \cite{GNS} (finite subgroups of $\mathrm{SL}_3(\CC)$) for earlier studies of this problem for certain classes of groups.
 
\subsection{Quaternionic reflection groups} We consider the division rings $\RR \subseteq \CC \subseteq \HH$ of real numbers, complex numbers, and quaternions. Let $D$ be one of $\RR$, $\CC$, or $\HH$. Given a finite-dimensional right $D$-vector space $V$ and a finite subgroup $W \leq \mathrm{GL}(V)$, a \emph{reflection} in $W$ is an element $r \in W$ such that $\mathrm{codim}_D(\mathrm{fix}_V(r))=1$. The group $W$ is a \emph{reflection group} if it is generated by the set $R$ of reflections it contains. If $D=\RR$ or $\CC$ and $W$ is a reflection group acting in the vector space $V$, then $W \otimes 1$ is a reflection group acting in $V \otimes_D \HH$. Thus each real or complex reflection group may be regarded as a quaternionic reflection group via extension of scalars. We will refer to quaternionic reflection groups (including those that arise via extension of scalars from real or complex reflection groups) alternatively as \emph{symplectic reflection groups}; in the next paragraph we briefly explain the reason for this terminology. For now we note that according to Cohen \cite{Coh}, the irreducible quaternionic reflection groups that do not arise from complex groups may be usefully divided into two classes, the complex-primitive and complex-imprimitive groups. There are three infinite families of imprimitive groups, two of which exist only in quaternionic rank two, and a primitive group is one of a finite list of exceptional groups, all of rank at most five. In this paper we are concerned mainly with the infinite family that exists in arbitrarily large dimension of irreducible imprimitive groups, which are certain normal subgroups of wreath products of the symmetric group with finite subgroups of $\HH^\times$.

Let $W$ be a quaternionic reflection group acting on $V$. There exists a $W$-invariant positive-definite Hermitian form $(\cdot,\cdot):V \times V \to \HH$. Now $\HH$ is the (right) $\CC$-vector space with basis $1,j$. The form $(\cdot,\cdot)$ may therefore be written uniquely as
$$(v_1,v_2)=\la v_1,v_2 \ra' + j \la v_1,v_2 \ra$$ for certain $\CC$-valued functions $\la \cdot,\cdot \ra$ and $\la \cdot,\cdot \ra'$. One checks that in fact $\la \cdot,\cdot \ra$ is a $W$-invariant non-degenerate $\CC$-biliinear alternating form on the $\CC$-vector space $V$, and that $\la \cdot,\cdot \ra'$ is a $W$-invariant positive definite Hermitian form on the $\CC$-vector space $V$, which are moreover related by the formula
$$\la v_1,v_2 \ra'=\overline{\la v_1,v_2 j \ra}.$$ Thus each quaternionic reflection group $W$ produces, upon restriction of scalars to $\CC \subseteq \HH$, a symplectic $\CC$-vector space $V$ on which $W$ acts by symplectic transformations, and such that the resulting group is generated by elements with fixed-space of complex codimension $2$ (these are sometimes known as \emph{symplectic reflections}). It is this $\CC$-vector space $V$ together with the quotient map $\pi:V \to V/W$ that we will study.

\subsection{Results and conjectures on the zero-fiber ring of a quaternionic reflection group}

\begin{theorem} \label{main}
Let $W$ be either  an irreducible imprimitive quaternionic reflection group of rank at least $3$ or the quaternionification of an irreducible complex reflection group acting on an $n$-dimensional $\HH$-vector space $V$. Suppose $W$ contains $N$ reflections and let $g=2N/n$. Then there exists a quotient of the zero-fiber ring of  complex dimension $(g+1)^n$. Moreover, if $W$ is a quaternionic group of rank $1$ then the zero fiber ring is of complex dimension exactly $g+1=2|W|-1$. 
\end{theorem}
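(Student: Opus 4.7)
The plan is to apply the framework of Section \ref{non-commutative fibers}, which (as in previous work on real and complex reflection groups) reduces the problem to constructing, for each $W$, an irreducible representation $L$ of the symplectic reflection algebra of $W$ with $\dim_\CC L = (g+1)^n$ and $\dim_\CC L^W = 1$; such an $L$ then produces a quotient of the zero-fiber ring of the required dimension. When $W$ is the quaternionification of an irreducible complex reflection group $W_\CC$, the symplectic reflection algebra of $W$ essentially contains the rational Cherednik algebra of $W_\CC$, and the module already constructed in \cite{Gri4} (of dimension $(g+1)^n$ with one-dimensional $W_\CC$-invariants) lifts to the required $L$, so no new construction is needed in this case.

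For an irreducible imprimitive quaternionic reflection group of rank $n \geq 3$, we write $W$ as a normal subgroup of $\Gamma \wr S_n$ for a finite subgroup $\Gamma \leq \mathrm{SL}_2(\CC)$; the SRA $H_c(W)$ then has a wreath-product structure analogous to that of the rational Cherednik algebra of $G(m, 1, n)$. Following the pattern of \cite{Gor} and \cite{GoGr}, we choose a parameter $c$ and a labelling representation $\rho$ of $W$ so that the baby Verma module $\Delta(\rho)$ has a unique simple quotient $L$. The main technical tool for analyzing $L$ is the reflection functor machinery of Crawley-Boevey and Holland \cite{CBH}, which relates the representation theory of the rank-one SRA associated to $\Gamma$ at various parameter values. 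Via the wreath-product decomposition of $H_c(W)$, these functors translate a finite-dimensional irreducible of the rank-one $\Gamma$-algebra at a suitable parameter (which, by \cite{CBH}, has dimension $g+1$ with one-dimensional $\Gamma$-invariants) into a rank-$n$ module of dimension $(g+1)^n$, reducing both the dimension count and the invariant computation at rank $n$ to the rank-one case. The principal obstacle is to control the invariants $L^W$ along this reduction; the required character computation tracks $\Gamma$- and $S_n$-invariants simultaneously through the reflection functors, essentially paralleling the strategy of \cite{GoGr} and \cite{Gri4}.

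For the rank-one assertion, let $W \subseteq \mathrm{SL}_2(\CC)$ have order $m$. Since no element of $W$ other than the identity fixes a nonzero vector in $V = \CC^2$, every non-identity element is a symplectic reflection, giving $N = m - 1$ and hence $g+1 = 2m-1$. To compute $\dim_\CC \CC[V]/I$, we use the McKay correspondence: the decomposition $\CC[V] = \bigoplus_\tau M_\tau \otimes \tau$ expresses $\CC[V]$ as a sum of indecomposable maximal Cohen--Macaulay modules over the Kleinian hypersurface $\CC[V]^W$, indexed by irreducible representations $\tau$ of $W$, and $[\CC[V]/I : \tau] = \mu(M_\tau)$ equals the minimal number of generators of $M_\tau$. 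The classical analysis of MCM modules over Kleinian singularities gives $\mu(M_{\tau_0}) = 1$ for the trivial representation and $\mu(M_\tau) = 2 \dim \tau$ for every non-trivial irreducible $\tau$, so
\[
\dim_\CC \CC[V]/I \;=\; \sum_\tau \dim \tau \cdot \mu(M_\tau) \;=\; 1 + 2 \sum_{\tau \neq \tau_0} (\dim \tau)^2 \;=\; 1 + 2(m-1) \;=\; 2m-1.
\]
The main obstacle is the equality $\mu(M_\tau) = 2 \dim \tau$ for non-trivial $\tau$; this can be established uniformly via the Auslander--Reiten structure of MCM modules over $\CC[V]^W$ (every non-trivial $M_\tau$ sits in an AR sequence whose middle term reveals the number of generators), or verified directly in each of the five ADE types from the explicit generators of the $\tau$-isotypic components of $\CC[V]$.
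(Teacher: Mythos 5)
Your treatment of the imprimitive groups of rank $n\geq 3$ --- which is the main new content of the theorem --- has a genuine gap in the dimension and invariant bookkeeping. You assert that a rank-one module for the algebra attached to $\Gamma$ ``has dimension $g+1$ with one-dimensional $\Gamma$-invariants'' and that the wreath construction converts it into a module of dimension $(g+1)^n$ with one-dimensional $W$-invariants; these two properties cannot hold simultaneously for the same module. If the rank-one module has one-dimensional $\Gamma$-invariants (character $\delta+\phi$, dimension $2|\Gamma|-1$), then $L^{\otimes n}$ has dimension $(2|\Gamma|-1)^n$, which is not $(g+1)^n$ since $g$ for $W_n(\Gamma,\Delta)$ equals $(n-1)|\Gamma|+2(|\Delta|-1)$ and grows with $n$. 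The module that actually works has character $n\delta+\phi$ (for $W_n(\Gamma)$), hence dimension $g+1$ but \emph{$n$-dimensional} $\Gamma$-invariants; the one-dimensionality of $W$-invariants in $L^{\otimes n}\otimes\mathrm{det}$ then comes from taking $\Lambda^n$ of that $n$-dimensional space (Lemma \ref{semi-invariants lemma}), not from propagating a one-dimensional invariant space through the tensor power. Moreover, for the proper normal subgroups $W_n(\Gamma,\Delta)\lhd W_n(\Gamma)$ --- which is where most of the work lies --- one needs the semi-invariant analysis relative to linear characters of $\Gamma/\Delta$ and the condition that $\dim L^\chi\leq n$ for every such character with equality for exactly one; this forces a case-by-case choice of a finite root $\alpha$ with $\mathrm{ch}(L)=(n-1)\delta+\alpha$ (the index-$3$ case in type $E_6^{(1)}$ and the index-$4$ case in type $D^{(1)}$ require specific roots), together with the normal-subgroup version of the fiber lemma (Lemma \ref{normal subgroup fiber lemma}). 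None of this appears in your outline, and invoking baby Verma modules and reflection functors does not substitute for it: the input actually needed from \cite{CBH} is the classification of finite-dimensional simples by their characters as positive roots (Corollary \ref{finite dimensional corollary}), which is what lets one prescribe $\mathrm{ch}(L)$ exactly.

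Your rank-one argument, by contrast, is a legitimate alternative to what the paper does. The paper establishes the upper bound $2|\Gamma|-1$ by exhibiting explicit Groebner bases for the invariant ideal in each ADE type, whereas you deduce it from the McKay decomposition $\CC[V]=\bigoplus_\tau M_\tau\otimes\tau$ and the fact that the nonfree indecomposable maximal Cohen--Macaulay modules over a Kleinian hypersurface arise from matrix factorizations of size $2\dim\tau$, so $\mu(M_\tau)=2\dim\tau=2\,\mathrm{rank}(M_\tau)$; this is the ``McKay-style'' route the paper explicitly acknowledges and declines in favor of the explicit computation. Your approach is cleaner but outsources the case analysis to the classification of matrix factorizations; the paper's buys an explicit Groebner basis for the zero-fiber ideal. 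Either is acceptable for the rank-one assertion, but the theorem does not follow until the rank-$\geq 3$ construction above is repaired.
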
 We remark that in case $W$ is the quaternionification of a compex reflection group, the theorem follows from the results of \cite{Gri4}. Therefore, aside from rank one we are reduced to studying the case of imprimitive groups of rank at least $3$, which we do using Cohen's classification \cite{Coh} and a case by case check. The assertion in the abstract follow from Theorem \ref{main} by a second application of the classification. Of course a more satisfying explanation (for instance generalizing what is done in \cite{Gri4}) would be very desirable. We state our hope as a conjecture (which is a quaternionic version of the  conjecture of Haiman \cite{Hai1} on the diagonal coinvariant rings of real reflection groups; in the appendix we prove that the number $g$ appearing here is an integer):
\begin{conjecture}
Let $W$ be an irreducible quaternionic reflection group containing $N$ reflections and acting in the $n$-dimensional quaternionic vector space $V$. Put $g=2N/n$. Then there is a $(g+1)^n$-dimensional quotient ring of the zero-fiber ring of $W$.
\end{conjecture}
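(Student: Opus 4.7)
The plan is to follow the general strategy formalized in Section \ref{non-commutative fibers} of the paper: for each irreducible quaternionic reflection group $W$ acting on the complex symplectic vector space $V$ of complex dimension $2n$, construct a finite-dimensional irreducible representation $L$ of the symplectic reflection algebra $H_{0,c}$ at an appropriate parameter $c$, with $\dim_\CC L = (g+1)^n$ and $\dim L^W = 1$ (possibly after twisting by the determinant character). The flat degeneration of $H_{t,c}$ to $\CC[V] \rtimes W$ at $t=0$, $c=0$ would then descend $L$ to a $(g+1)^n$-dimensional quotient of the zero-fiber ring of $W$.

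Because Theorem \ref{main} already handles the imprimitive groups of rank at least three, the quaternionifications of complex reflection groups, and all rank one groups, the outstanding cases for the conjecture are the complex-primitive quaternionic reflection groups together with the small-rank exceptional imprimitive families appearing in Cohen's classification \cite{Coh}. For each such group I would compute the baby Verma modules of the restricted SRA $H_{0,c}$ for a one-parameter family of $c$, identify the candidate $L$ by analyzing the corresponding Calogero-Moser families and their $W$-characters, and then check the dimension. The reflection-functor technology of \cite{CBH}, and its wreath-product extensions, could be used to transport information between values of $c$ where explicit computations are feasible, in the same spirit as the Heckman-Opdam shift functors used by Gordon in \cite{Gor}.

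The principal obstacle is the lack of a uniform mechanism for constructing $L$. In the complex reflection case, \cite{Gri4} produced $L$ uniformly via the polynomial (Dunkl-operator) representation of the rational Cherednik algebra combined with Berest-Chalykh quasi-invariants; no analogous polynomial representation is available for genuinely quaternionic $W$, so the existing techniques furnish only case-by-case lower bounds. A fully satisfactory proof will therefore likely require either a new geometric input---perhaps from symplectic resolutions of $V/W$ or from Etingof-style orbifold cohomology formulas counting irreducible $H_{0,c}(W)$-modules, which at least constrains the possible dimension vectors---or a genuinely new algebraic construction of $L$ that bypasses the need for a polynomial representation altogether, possibly by realizing $L$ directly as the socle of a carefully chosen induced module from a parabolic subgroup of $W$ whose fixed-space geometry captures the constant $g$.
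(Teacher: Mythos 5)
The statement you are addressing is stated in the paper as a \emph{conjecture}, and the paper does not prove it in full generality: Theorem \ref{main} establishes it only for the quaternionifications of complex reflection groups, the imprimitive groups $W_n(\Gamma,\Delta)$ of rank at least $3$, and the rank one groups, which by Cohen's classification \cite{Coh} covers all irreducible groups of rank at least $6$. Your proposal correctly identifies the general mechanism (a non-commutative fiber in the sense of \ref{non-commutative fibers}, i.e.\ an irreducible module $L$ for a symplectic reflection algebra with $\dim L^W=1$, whose associated graded is a quotient of the zero-fiber ring by Lemma \ref{fiber lemma}) and correctly isolates the outstanding cases: the complex-primitive groups and the two imprimitive families that exist only in rank $2$. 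But for those cases you do not construct $L$; you describe what one would have to do and candidly admit that no uniform mechanism is available. That is an accurate description of the state of the art, but it is not a proof, so the conjecture remains open under your proposal exactly as it does in the paper.

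There is also one concrete technical error worth flagging: you propose to find $L$ among the irreducible quotients of baby Verma modules for the \emph{restricted} symplectic reflection algebra $H_{0,c}$. This cannot work. At $t=0$ the symplectic reflection algebra is a finite module over its center, so every irreducible representation has dimension at most $|W|$, whereas $(g+1)^n$ generically exceeds $|W|$ --- already in rank one, $g+1=2|\Gamma|-1>|\Gamma|$, and for $W_n(\Gamma,\Delta)$ one has $(g+1)^n\approx (n|\Gamma|)^n$ against $|W|\leq |\Gamma|^n\, n!$. The modules the paper actually uses come from Corollary \ref{finite dimensional corollary}: a CBH module with character $\alpha=(n-1)\delta+\alpha'$ satisfies $c\cdot\alpha=0$ but generically $c\cdot\delta\neq 0$, which in Etingof--Ginzburg normalization is the $t\neq 0$ regime (this is also the setting of \cite{EtMo}). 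So the representations you need live away from the locus where baby Verma/Calogero--Moser techniques apply, and any attack on the primitive groups will have to produce genuinely ``large'' finite-dimensional modules of the $t\neq 0$ algebra, for which, as you note, no analogue of the polynomial representation of \cite{Gri4} is currently available.
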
 The results of \cite{EtMo} imply that the quotients we construct here arise from irreducible representations that exist at generic points in a certain hyperplane in the parameter space for the symplectic reflection algebra; the same phenomenon occurs for the representations constructed in \cite{Gri4}. So one might optimistically conjecture that this happens for general quaternionic reflection groups. One might additionally hope that these representations produce an extension of some of the constructions from \cite{ARR} to the case of quaternionic reflection groups.

There is another phenomenon which we would like to understand: for $W$ a complex reflection group, the space of semi-invariants for the determinant representation in the non-commutative fiber constructed in \cite{GoGr} is of dimension equal to the $W$-Catalan number. These numbers were first introduced in \cite{BeRe}, who conjectured part of the result later proved by \cite{GoGr}; surprisingly, this same dimension occurs also for the larger non-commutative fiber from \cite{Gri4}. In these cases, there is an additional internal grading on the representations involved, and both the dimensions and their $q$-analogs factor in a nice way. In \ref{Catalan numbers}, we observe a similar phenomenon for the (ungraded) dimensions of certain spaces of semi-invariants. It would be interesting to investigate if this phenomenon persists for other classes of quaternionic reflection groups, and if conjectures along the lines of those from \cite{Stu} might be formulated. 

\section{Preliminaries}

\subsection{Quaternions} We write $\HH$ for the ring of quaternions, which is the $\RR$-algebra with basis $1,i,j,k$ and multiplication determined by the rules
$$i^2=j^2=k^2=-1=ijk.$$ The \emph{conjugate} of a quaternion is written as $$\overline{a+bi+cj+dk}=a-bi-cj-dk \quad \hbox{for $a,b,c,d \in \RR$,}$$ and this defines an $\RR$-linear anti-automorphism of $\HH$. The \emph{norm} $|z|$ of $z \in \HH$ is then determined by $|z|^2=\overline{z} z \in \RR$. We write 
$\HH^n$ for the right $\HH$-vector space of column vectors of length $n$ with entries in $\HH$, and thus identify $\mathrm{End}_{\HH}(\HH^n)$ with the matrix ring $\mathrm{Mat}_{n \times n}(\HH)$ of $n$ by $n$ matrices with entries in $\HH$, acting on $\HH^n$ by left multiplication. We will write $\mathrm{GL}_n(\HH)$ for the group of invertible elements of this matrix ring, and define the \emph{quaternionic unitary group}
$$\mathrm{U}_n(\HH)=\{g \in \mathrm{GL}_n(\HH) \ | \ \overline{g}^t=g^{-1} \},$$ where for a matrix $g$ we write $\overline{g}$ for the matrix obtained by conjugating all the entries of $g$ and $\overline{g}^t$ for its transpose. In particular, the group $\mathrm{U}_1(\HH) \leq \HH^\times$ is the group of quatenions of norm $1$, and contains each finite subgroup $\Gamma$ of $\HH^\times$. In general, the group $\mathrm{U}_n(\HH)$ consists of the elements of $\mathrm{GL}_n(\HH)$ preserving the positive definite Hermitian form
$$(x,y)=\sum_{p=1}^n \overline{x_p} y_p, \quad \hbox{where $x,y \in \HH^n$.}$$ 

\subsection{Quaternionic reflection groups} Each finite subgroup $W$ of $\mathrm{GL}_n(\HH)$ is conjugate to a finite subgroup of the unitary group $\mathrm{U}_n(\HH)$. Given a finite subgroup $W \leq \mathrm{U}_n(\HH)$, we write 
$$R=\{r \in W \ | \ \mathrm{dim}_\HH(\mathrm{fix}(r))=n-1 \}$$ for the set of \emph{reflections} it contains. The group $W$ is a \emph{quaternionic reflection group} if it is generated by $R$. For instance, each complex reflection group produces a quaternionic reflection group via extension of scalars. Cohen \cite{Coh} has classified the irreducible quaternionic reflection groups. Given an irreducible quaternionic reflection group we define $N=|R|$ and $g=2N/n$, an analog of the Coxeter number of $W$ (for the quaternionification of a complex reflection group with reflections of order greater than two, $g$ is strictly larger than the usual Coxeter number $h$). See the appendix for a proof that three variants of the Coxeter number, including $g$, are all integers.

\subsection{The finite subgroups of $\HH^\times$ and their McKay graphs} Let $\Gamma$ be a finite subgroup of $\HH^\times$. Identifying $\HH$ with $\CC^2$, the group $\Gamma$ is identified with a finite subgroup of $\mathrm{SL}_2(\CC)$. Its \emph{McKay graph} is the graph whose vertex set $I$ is the set of isomorphism classes of irreducible $\CC \Gamma$-modules, with $d$ edges connecting $i$ and $j$ if $j$ appears with multiplicity $d$ in $i \otimes \CC^2$ (equivalently, if $i$ appears with multiplicity $d$ in $j \otimes \CC^2$). McKay (see for example \cite{McK1} and the announcement \cite{McK2}) observed that the graphs arising in this fashion are precisely the affine Dynkin diagrams of type $ADE$: the cyclic subgroups produce type $A$, the binary dihedral groups produce type $D$, and the binary platonic groups produce type $E$. Writing $K(\CC \Gamma)$ for the Grothendieck group of the category of finite-dimensional $\CC \Gamma$ modules, we then have 
$$K(\CC \Gamma) \cong \ZZ^I,$$ identifying $K(\CC \Gamma)$ with the root lattice $\ZZ^I$ associated with the Dynkin diagram of $\Gamma$. Thus given a $\CC \Gamma$-module $M$, its \emph{character} is the class $\mathrm{ch}(M) \in \ZZ_{\geq 0}^I \subseteq \ZZ^I$ of $M$ in the Grothendieck group, which we may also visualize as a labeling of the Dynkin diagram by non-negative integers. In particular, the character of the regular representation of $\Gamma$ is the fundamental imaginary root $\delta$, that is $\mathrm{ch}(\CC \Gamma)=\delta$.

\subsection{The root system} \label{root system} In accordance with our identification of $\ZZ^I$ with the root lattice, we will sometimes write $\alpha_i$ for the simple root associated with the $i$th vertex of $I$, which has coefficient $1$ on $i$ and $0$ on each $j \neq i$. With this convention,
$$\delta=\sum_{i \in I} n_i \alpha_i,$$ where $n_i=\mathrm{dim}_\CC(i)$. The root system associated with the Dynkin diagram may then be described as follows: the roots are those elements of the form $n \delta+\alpha$, where $n \in \ZZ$ and $\alpha$ is a root of the finite root system associated with the Dynkin diagram obtained by deleting the vertex $0$ corresponding to the trivial representation of $\Gamma$. Thus we will write $\alpha_0$ for the simple root corresponding to the vertex of the trivial representation of $\Gamma$. The positive roots are those of the form $n \delta +\alpha$, where $n \geq 0$, $\alpha$ is a finite root, and $\alpha$ is positive if $n=0$. 

\subsection{The groups $W_n(\Gamma,\Delta)$} We now fix a positive integer $n$, a finite subgroup $\Gamma$ of $\HH^\times$, and a normal subgroup $\Delta \leq \Gamma$ such that $\Gamma / \Delta$ is abelian. We will define a certain subgroup $W_n(\Gamma,\Delta) \leq \mathrm{U}_n(\HH)$ associated with these data. Given $\gamma \in \Gamma$ and an integer $1 \leq p \leq n$, we write $\gamma^{(p)}$ for the diagonal matrix with $\gamma$ in position $(p,p)$ and with $1$ in every other diagonal position. We identify the symmetric group $S_n$ with the subgroup of $\mathrm{U}_n(\HH)$ consisting of permutation matrices (those with exactly one $1$ in each row and each column, and $0$'s in the other positions). 

Next we put
$$W_n(\Gamma)=\{\gamma_1^{(1)} \gamma_2^{(2)} \cdots \gamma_n^{(n)} w \ | \ \gamma_1,\gamma_2,\dots,\gamma_n \in \Gamma, \ w \in S_n \} \leq \mathrm{U}_n(\HH) .$$ Thus for $n=1$ we have $W_1(\Gamma)=\Gamma$, and for $\Gamma=\{1 \}$ we have $W_n(\Gamma)=S_n$. In general, $W_n(\Gamma)$ consists of matrices with exactly one non-zero entry in each row and each column, and so that the non-zero entries are elements of $\Gamma$. Finally we define
$$W_n(\Gamma,\Delta)=\{\gamma_1^{(1)} \gamma_2^{(2)} \cdots \gamma_n^{(n)} w \in W_n(\Gamma) \ | \ \gamma_1 \gamma_2 \cdots \gamma_n \in \Delta \}.$$ We note that in the definition of $W_n(\Gamma,\Delta)$, the order of the factors in the product $\gamma_1 \gamma_2 \cdots \gamma_n$ is irrelevant since $\Gamma/\Delta$ is abelian. Thus $W_n(\Gamma,\Delta)$ consists of $n$ by $n$ quaternionic matrices with exactly one non-zero entry in each row and each column, and so that the non-zero entries are elements of $\Gamma$ with product in $\Delta$. When $n=1$ we have $W_1(\Gamma,\Delta)=W_1(\Delta)=\Delta$ so we only obtain a new group for $n>1$. 

\subsection{Reflections and some numerology for $W_n(\Gamma,\Delta)$} Each group $W_n(\Gamma,\Delta)$ is generated by the set $R$ of reflections it contains. For $W_n(\Gamma)$, this set $R$ consists of two sorts of elements:
\begin{itemize}
\item[(a)] For each $\gamma \in \Gamma$ and each pair $1 \leq p < q \leq n$ of integers, the element
$$\gamma^{(p)} (pq) (\gamma^{(p)})^{-1},$$ where $(pq)$ is the transposition matrix interchanging the $p$th and $q$th canonical basis vectors of $\HH^n$ and leaving the others fixed, and

\item[(b)] for each $\gamma \in \Gamma \setminus \{ 1\}$ and each integer $1 \leq p \leq n$, the element $\gamma^{(p)}$.

\end{itemize} Therefore $W_n(\Gamma)$ contains 
$$N={n \choose 2} |\Gamma|+n(|\Gamma|-1)$$ reflections. Of these, the reflections that belong to $W_n(\Gamma,\Delta)$ are all those of type (a), together with the reflections of type (b) such that $\gamma \in \Delta$. Hence $W_n(\Gamma,\Delta)$ contains 
\begin{equation} \label{N for W} N={n \choose 2} |\Gamma|+n(|\Delta|-1) \end{equation} reflections, and in particular we have
\begin{equation} \label{g for W} g=2N/n=(n-1)|\Gamma|+2(|\Delta|-1). \end{equation}

\subsection{Semi-direct product algebra} Let $F$ be a field and let $A$ be an $F$-algebra equipped with an action by automorphisms of a group $G$. The \emph{semi-direct product algebra} (or \emph{twisted group ring} or \emph{smash product algebra}) $A \rtimes G$ is 
$$A \otimes_F FG \quad \text{with multiplication given by} \quad (a_1 \otimes g_1)(a_2 \otimes g_2)=a_1 g_1(a_2) \otimes g_1 g_2$$  for $a_1,a_2 \in A$ and $g_1,g_2 \in G$. For instance, given a field $F$, an $F$-vector space $V$, and a linear group $W \leq \mathrm{GL}(V)$, the group $W$ acts by automorphisms on the tensor algebra $T(V)$ of $V$, as well as on various related algebras such as the algebra $F[V]$, so we may form the associated semi-direct product algebras.

\subsection{Deformations} \label{deformations} In this subsection and the next two, we will work with an arbitrary field $F$, a finite-dimensional $F$-vector space $V$, and a finite subgroup $W \leq \mathrm{GL}(V)$. Our applications will all involve $F=\CC$, but here we present the simple facts we will need in their natural generality.

Let $A$ be a (unital, associative) $F$-algebra. A (non-negative, increasing, exhaustive) \emph{filtration} on $A$ is a sequence $(A^{\leq d})_{d \in \ZZ_{\geq 0}}$ of subspaces of $A$ indexed by non-negative integers $d \in \ZZ_{\geq 0}$ such that
$$1 \in A^{\leq 0}, \quad A^{\leq d} \subseteq A^{\leq e} \quad \hbox{if $d \leq e$,} \quad A^{\leq d} A^{\leq e} \subseteq A^{\leq d+e} \quad \text{and} \quad A=\bigcup_{d=0}^\infty A^{\leq d}.$$ The \emph{associated graded algebra} of $A$ with respect to this filtration is
$$\mathrm{gr}(A)=\bigoplus_{d=0}^\infty A^{\leq d} / A^{\leq d-1},$$ where $A^{\leq -1}=0$. The multiplication is induced by the multiplication of $A$. 

A \emph{deformation} of $F[V] \rtimes W$ is a filtered $F$-algebra $A$ equipped with an isomorphism of graded $F$-algebras $\phi: F[V] \rtimes W \to \mathrm{gr}(A)$. We note that upon restricting to the degree $0$ subalgebras, the map $\phi$ induces an isomorphism from $F W$ to $A^{\leq 0}$, so in particular $F W$ may be identified with the subalgebra of degree $0$ elements of $A$. Assuming that the order of $W$ is invertible in $F$, the \emph{spherical subalgebra} of a deformation $A$ of $F[V] \rtimes W$ is the idempotent slice subalgebra $eAe$, where 
$$e=\frac{1}{|W|} \sum_{w \in W} w$$ is the symmetrizing idempotent. If we think of the inclusion $F [V]^W \cong e(F[V] \rtimes W) e \subseteq F[V] \rtimes W$ as being a non-commutative version of the quotient map $V \to V/W$, then the inclusion $eAe \subseteq A$ should likewise be regarded as a non-commutative version of this quotient map. Taking this philosophy seriously leads to the definitions in the next paragraph.

\subsection{Non-commutative points and fibers} \label{non-commutative fibers}

Assuming as above that the order $|W|$ of $W$ is invertible in $F$, a \emph{non-commutative point} of $V/W$ is a one-dimensional representation of $eAe$ for some deformation $A$ of $F[V] \rtimes W$. A \emph{non-commutative fiber} of $V \to V/W$ is a representation $M$ of some deformation $A$ of $F[V] \rtimes W$ with the properties
\begin{itemize}
\item[(a)] $\mathrm{dim}_F(M^W)=1$, and
\item[(b)] $M$ is generated by $M^W$ as an $A$-module.
\end{itemize} Given a non-commutative fiber $M$ we define a filtration $M^{\leq d}$ on $M$ by 
$$M^{\leq d}=A^{\leq d} M^W.$$ Each non-commutative point $N$ produces a non-commutative fiber $M=Ae \otimes_{eAe} N$, and conversely each non-commutative fiber $M$ produces a non-commutative point $N=eM \cong M^W$. It seems likely that the classification problem for non-commutative points and fibers will be very interesting and challenging (for instance, by analogy with the same problem for finite $W$-algebras; see e.g. \cite{Los}).

The next lemma explains the utility of non-commutative fibers in the context of the zero-fiber ring. It is a version of Lemma 3.1 from \cite{Gri4}.

\begin{lemma} \label{fiber lemma}
Let $M$ be a non-commutative fiber.  Then $F[V]^W_{>0}$ acts by zero on $\mathrm{gr}(M)$, which is therefore a quotient of $F[\pi^{-1}(0)]$. In particular, the degree of the zero fiber is at least the dimension of $M$.
\end{lemma}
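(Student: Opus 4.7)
The plan is to verify directly that each positive-degree homogeneous element $f \in F[V]^W_{>0}$ acts as zero on $\mathrm{gr}(M)$ through the identification $\mathrm{gr}(A) \cong F[V] \rtimes W$ provided by $\phi$, and then to deduce the rest. Once the annihilation is established, $\mathrm{gr}(M)$ becomes a module over $F[V]/(F[V]^W_{>0}) = F[\pi^{-1}(0)]$, and since $M = A \cdot M^W$ the graded module is cyclic over $F[V]$, generated by the class of any $m_0$ spanning $M^W$; this realizes $\mathrm{gr}(M)$ as a quotient of the zero-fiber ring. The dimension inequality then follows from $\dim_F(M) = \dim_F(\mathrm{gr}(M))$.

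For the core step, the first move is to lift a homogeneous $f \in F[V]^W$ of degree $d>0$ to a $W$-invariant element $\tilde{f} \in A^{\leq d}$. Taking any lift $\tilde{f}_0$ and averaging by $\tilde{f} = |W|^{-1} \sum_{w \in W} w \tilde{f}_0 w^{-1}$ produces such a lift, using that $|W|$ is invertible in $F$ and that the conjugation action of $W$ on $\mathrm{gr}(A)$ matches, via $\phi$, the built-in $W$-action on $F[V] \rtimes W$ (so the averaged element still has image $f$). With $\tilde{f}$ in hand, the key observation is that $\tilde{f}$ preserves $M^W$; since $M^W = M^{\leq 0}$ is one-dimensional, $\tilde{f} m_0 \in M^{\leq 0} \subseteq M^{\leq d-1}$, so $\tilde{f}$ annihilates the class of $m_0$ in degree zero of $\mathrm{gr}(M)$.

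To propagate this annihilation to all filtration degrees, I would exploit that $f$ is central in $F[V] \rtimes W$, hence $[\tilde{f}, a] \in A^{\leq d+e-1}$ for any $a \in A^{\leq e}$. Writing a typical element of $M^{\leq e}$ as $a m_0$ and computing
$$\tilde{f}(a m_0) = a (\tilde{f} m_0) + [\tilde{f}, a] m_0,$$
both summands land in $M^{\leq d+e-1}$ by the previous paragraph and the commutator estimate, respectively. Hence $\tilde{f} M^{\leq e} \subseteq M^{\leq d+e-1}$ for all $e$, which is exactly the statement that $f$ acts as zero on $\mathrm{gr}(M)$.

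The main obstacle is the setup of the $W$-invariant lift: it requires matching the two natural $W$-actions under $\phi$ (conjugation inside $A$ versus the intrinsic action on $F[V] \rtimes W$) and crucially uses that $|W|$ is invertible in $F$. Everything else reduces to straightforward bookkeeping with the filtration.
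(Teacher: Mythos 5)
Your proof is correct, and its overall architecture --- annihilation by positive-degree invariants, then cyclicity of $\mathrm{gr}(M)$ over $F[V]$, then $\dim_F \mathrm{gr}(M)=\dim_F M$ --- is the same as the paper's; the mechanism for the key annihilation step, however, is genuinely different. The paper works entirely on the graded side: since $|W|$ is invertible, taking $W$-invariants is exact, so the trivial representation occurs in $\mathrm{gr}(M)$ exactly once and in degree $0$; for homogeneous $f\in F[V]^W$ of degree $d>0$, multiplication by $f$ is a map of $W$-modules $\mathrm{gr}_0(M)\to\mathrm{gr}_d(M)$ whose image would be a second copy of the trivial representation, hence is zero, and the vanishing propagates to all of $\mathrm{gr}(M)$ because $f$ is central in $\mathrm{gr}(A)\cong F[V]\rtimes W$ and $\mathrm{gr}(M)=F[V]\,\mathrm{gr}_0(M)$. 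You instead work upstairs in $A$: you average to get a $W$-invariant lift $\tilde f\in A^{\leq d}$, observe that $\tilde f$ preserves $M^W=M^{\leq 0}$ and therefore drops filtration degree on the generator, and propagate via the commutator estimate $[\tilde f,A^{\leq e}]\subseteq A^{\leq d+e-1}$. Both arguments consume exactly the same hypotheses ($|W|$ invertible, $\dim_F M^W=1$, $M=AM^W$); the paper's version is shorter because $f$ already lives in $\mathrm{gr}(A)$ and is central there, so no lift or commutator bookkeeping is required, whereas your version makes explicit where the invertibility of $|W|$ and the compatibility of the conjugation action on $\mathrm{gr}(A)$ with the intrinsic $W$-action on $F[V]\rtimes W$ enter. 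Everything you assert checks out, so this is a valid alternative proof.
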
 
\begin{proof}
First we observe that, since $M^W$ generates $M$ as an $A$-module, the filtration $M^{\leq d}$ is exhaustive and hence the dimension of $\mathrm{gr}(M)$ is equal to the dimension of $M$. By construction, $\mathrm{gr}(M)$ is generated as a $F[V]$-module by its degree $0$ subspace: given $\overline{m} \in \mathrm{gr}_d(M)=M^{\leq d} / M^{\leq d-1}$ there are $a \in A^{\leq d}$ and $m_0 \in M^W$ with $m=a m_0$, implying $$\overline{m}=\overline{a} m_0 \in (F[V] \rtimes W) \mathrm{gr}_0(M)=F[V] \mathrm{gr}_0(M).$$ Next, observe that the unique occurrence of the trivial representation of $W$ in $\mathrm{gr}(M)$ lies in degree $0$. This implies that each positive degree $W$-invariant polynomial acts by $0$ on the degree $0$ part of $\mathrm{gr}(M)$ and hence on all of $\mathrm{gr}(M)$. Hence $\mathrm{gr}(M)$ is a quotient of the zero-fiber ring $F[\pi^{-1}(0)]$.
\end{proof} 

\subsection{Normal subgroups} Next suppose $W \lhd N \leq \mathrm{GL}(V)$ is a pair consisting of two finite linear groups with $W$ normal in $N$; we have in mind here the pair $W_n(\Gamma,\Delta) \lhd W_n(\Gamma)$. The group $N$ then stabilizes the set of positive-degree $W$-invariants in $F[V]$ and hence acts by automorphisms on the zero-fiber ring $F[\pi^{-1}(0)]$ for $W$. We have the following analog of the previous lemma.
\begin{lemma} \label{normal subgroup fiber lemma}
Let $A$ be a deformation of $F[V] \rtimes N$ and let $M$ be an $A$-module with the properties
\begin{itemize}
\item[(a)] $\mathrm{dim}_F(M^W)=1$, and
\item[(b)] $M$ is generated by $M^W$ as an $A$-module. 
\end{itemize}  Then $F[V]^W_{>0}$ acts by zero on $\mathrm{gr}(M)$, which is therefore a quotient of $F[\pi^{-1}(0)]$. In particular, the degree of the zero fiber is at least the dimension of $M$.
\end{lemma}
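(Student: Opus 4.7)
The plan is to follow the proof of Lemma~\ref{fiber lemma} essentially verbatim, the only novelty being that the degree-zero subalgebra of $A$ is now identified with $FN$ rather than $FW$. Every formal property used in that earlier proof --- one-dimensionality of the $W$-invariants in $\gr(M)$, stability of the filtration, and generation by the degree-zero piece --- still holds here, using only that $W$ is normal in $N$.

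The starting point is that $M^W$ is $N$-stable: if $n \in N$, $m \in M^W$, and $w \in W$, then $w(nm) = n(n^{-1}wn)m = nm$ since $n^{-1}wn \in W$ and $m$ is $W$-invariant. Combined with the identification $A^{\leq 0} \cong FN$ coming from the deformation isomorphism, this gives $M^{\leq 0} = A^{\leq 0} M^W = M^W$, so $\gr_0(M) \cong M^W$. The filtration $M^{\leq d}:=A^{\leq d}M^W$ is exhaustive by hypothesis~(b) and is $W$-stable since $W \subseteq A^{\leq 0}$, so $\gr(M)$ has the same dimension as $M$ as a $W$-module.

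Next, exactly as in Lemma~\ref{fiber lemma}, $\gr(M)$ is generated by $\gr_0(M)$ as an $F[V]$-module: for $\overline{m} \in \gr_d(M)$ with $m = a m_0$, $a \in A^{\leq d}$, $m_0 \in M^W$, one has $\overline{m} = \overline{a} m_0 \in (F[V] \rtimes N) \gr_0(M) = F[V] \gr_0(M)$, where the last equality uses that $N$ preserves $\gr_0(M)$. In particular $\gr(M)$ is cyclic over $F[V]$. The crucial input is then that $\gr_d(M)^W = 0$ for $d \geq 1$: since $M^W \subseteq M^{\leq 0}$ we have $(M^{\leq d})^W = M^W$ for every $d \geq 0$, and taking $W$-invariants of the short exact sequence $0 \to M^{\leq d-1} \to M^{\leq d} \to \gr_d(M) \to 0$ (exact when $|W|$ is invertible in $F$) kills the quotient. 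Then for $f \in F[V]^W_{>0}$ and $v \in \gr_0(M)$ the calculation $w(fv) = (w \cdot f)(wv) = fv$ forces $fv \in \gr_d(M)^W = 0$, and because $F[V]^W$ is central in $F[V]$ and $\gr(M) = F[V] \cdot \gr_0(M)$, this forces $F[V]^W_{>0}$ to annihilate all of $\gr(M)$. Cyclicity then identifies $\gr(M)$ with a quotient of $F[V]/F[V] \cdot F[V]^W_{>0} = F[\pi^{-1}(0)]$.

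I do not anticipate any real obstacle: the only conceptual point that requires vigilance is that $N$ may act by a nontrivial character on the line $M^W$, so one must consistently use $W$-invariants and not $N$-invariants throughout. Since the full copy of $F[V]^W$ inside $\gr(A) \cong F[V] \rtimes N$ is still centralized by the $W$-action, the $W$-equivariance argument that forces the positive-degree invariants to act by zero transfers without change from the non-normal case.
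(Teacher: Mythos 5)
Your proposal is correct and follows the same route as the paper: the paper's proof simply says to repeat the argument of Lemma~\ref{fiber lemma}, noting that $N$ acts by a linear character on $\mathrm{gr}_0(M)=M^W$ (your normality computation), so that $(F[V]\rtimes N)\,\mathrm{gr}_0(M)=F[V]\,\mathrm{gr}_0(M)$. The extra details you supply (the exhaustive $W$-stable filtration, the vanishing of $\mathrm{gr}_d(M)^W$ for $d\geq 1$, and the consequent annihilation by $F[V]^W_{>0}$) are exactly the steps of the earlier proof spelled out.
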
 
\begin{proof}
The proof is the same, once we observe that
$$(F[V] \rtimes N) \mathrm{gr}_0(M)=F[V] \mathrm{gr}_0(M),$$ since $N$ acts by some linear character on $\mathrm{gr}_0(M)=M^W$. 
\end{proof}

\subsection{The symplectic reflection algebra of rank $1$} Here we define the deformation of $\CC[x,y] \rtimes \Gamma$ introduced in \cite{CBH}, also known as the \emph{symplectic reflection algebra} (as introduced in more generality in \cite{EtGi}) of $\Gamma$. Thus let $\Gamma \subseteq \mathrm{SL}_2(\CC)$ be a finite subgroup, and choose linear functions $x,y$ on $V=\CC^2$ with $\la x,y \ra=1$, where $\la \cdot,\cdot \ra$ is the symplectic form on $V^*$ that is dual to the standard symplectic form on $V$. We have $\CC[V] = \CC[x,y]$. We write $I$ for an index set for the irreducible $\CC \Gamma$-modules (thus as above $I$ is the set of vertices of the Dynkin diagram of $\Gamma$), and for $i \in I$ let $e_i \in Z(\CC \Gamma)$ be the corresponding central primitive idempotent. We let $c=(c_i)_{i \in I} \in \CC^I$ be a collection of complex numbers indexed by $I$ and put
$$H_c(\Gamma,V)=\CC \la x,y \ra \rtimes \Gamma / ( xy-yx=\sum_{i \in I} c_i e_i ),$$ where $\CC \la x,y \ra$ is the tensor algebra of $V^*$, on which $\Gamma$ acts by automorphisms. The \emph{PBW theorem} (see e.g. \cite{Gri1}, Theorem 2.1) for $H_c(\Gamma,V)$ states that the monomials $x^a y^b \gamma$ for $a, b \in \ZZ_{\geq 0}$ and $\gamma \in \Gamma$ are a $\CC$-basis of $H_c(\Gamma,V)$. Equipping $H_c(\Gamma,V)$ with the filtration for which $x$ and $y$ have degree $1$ and elements of $\Gamma$ have degree $0$, this implies that $H_c(\Gamma,V)$ is a deformation of $\CC[x,y] \rtimes \Gamma$. 

\subsection{A deformation of $\CC[V^{\times n}] \rtimes W_n(\Gamma)$} We have $V^{\times n}=\HH^n$, which we regard as a $\CC$-vector space on which $W_n(\Gamma)$ acts $\CC$-linearly by restriction of scalars. Fixing a parameter $c$ as above, we form the tensor product algebra $H_c(\Gamma,V)^{\otimes n}$. This algebra is equipped with the permutation action of $S_n$ on the tensor factors, and we form the semi-direct product algebra
$$H_c(\Gamma,V)^{\otimes n} \rtimes S_n.$$ The tensor product filtration on $H_c(\Gamma,V)^{\otimes n}$ extends to a filtration on $H_c(\Gamma,V)^{\otimes n} \rtimes S_n$ by placing $S_n$ in degree $0$, and with this filtration the algebra $H_c(\Gamma,V)^{\otimes n} \rtimes S_n$ is a deformation of $\CC[V^{\times n}] \rtimes W_n(\Gamma)$. It is a special case of the symplectic reflection algebra of type $W_n(\Gamma)$ (see definition 2.1 and the beginning of  section 3 of \cite{EtMo}). 

\subsection{Representations} Given an $H_c(\Gamma,V)$-module $M$ and a $\CC S_n$-module $N$, the vector space $M^{\otimes n} \otimes N$ is a $H_c(\Gamma,V)^{\otimes n} \rtimes S_n$-module.
\begin{lemma} \label{irreducibility lemma} If $M$ and $N$ are irreducible finite-dimensional modules for $H_c(\Gamma,V)$ and $\CC S_n$, respectively, then  $M^{\otimes n} \otimes N$ is an irreducible finite-dimensional $H_c(\Gamma,V)^{\otimes n} \rtimes S_n$-module.
\end{lemma}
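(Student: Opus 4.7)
The plan is to combine the Jacobson density theorem with a two-step tensor-product argument, exploiting the fact that the base field $\CC$ is algebraically closed. Concretely, I would first show that $M^{\otimes n}$ is already irreducible as a module over the (non-twisted) algebra $A := H_c(\Gamma,V)^{\otimes n}$, and then bootstrap from $A$-irreducibility to $A \rtimes S_n$-irreducibility of $M^{\otimes n} \otimes N$ by using the diagonal $S_n$-action to extract a second copy of the density theorem.

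For the first step, since $M$ is a finite-dimensional irreducible $H_c(\Gamma,V)$-module over the algebraically closed field $\CC$, Jacobson density gives a surjection $H_c(\Gamma,V) \twoheadrightarrow \End_\CC(M)$. Taking the $n$-fold tensor product, $A$ surjects onto $\End_\CC(M)^{\otimes n} = \End_\CC(M^{\otimes n})$, so $M^{\otimes n}$ is irreducible as an $A$-module. Next one checks that $A \rtimes S_n$ acts on $M^{\otimes n} \otimes N$ by letting $A$ act on the first factor and $S_n$ act diagonally, permuting the tensor slots of $M^{\otimes n}$ while acting on $N$ through its given irreducible representation. This is a genuine $A \rtimes S_n$-module structure precisely because the permutation action of $S_n$ on the tensor factors of $M^{\otimes n}$ intertwines with the permutation action of $S_n$ on the tensor factors of $A$ that defines the semi-direct product.

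For the key step, write $\rho_M$ and $\rho_N$ for the $S_n$-representations on $M^{\otimes n}$ and on $N$. The image of $A$ in $\End_\CC(M^{\otimes n} \otimes N)$ is $\End_\CC(M^{\otimes n}) \otimes 1$ by the first step, while each $s \in S_n$ maps to $\rho_M(s) \otimes \rho_N(s)$. Premultiplying the latter by $\rho_M(s)^{-1} \otimes 1$, which already lies in the image, produces $1 \otimes \rho_N(s)$ in the image. Since $N$ is a finite-dimensional irreducible $\CC S_n$-module, a second application of Jacobson density gives $\rho_N(\CC S_n) = \End_\CC(N)$, so $1 \otimes \End_\CC(N)$ also lies in the image. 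Combining with $\End_\CC(M^{\otimes n}) \otimes 1$, the image is all of $\End_\CC(M^{\otimes n}) \otimes \End_\CC(N) = \End_\CC(M^{\otimes n} \otimes N)$, whence $M^{\otimes n} \otimes N$ is irreducible.

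I do not anticipate any serious obstacle; the argument is a standard piece of Clifford-theoretic bookkeeping once one notices that density should be applied twice, once to obtain all of $\End_\CC(M^{\otimes n})$ from $A$ and once to extract $\End_\CC(N)$ from the diagonal $S_n$-action. The only minor subtlety is verifying the compatibility in the previous paragraph between the diagonal $S_n$-action and the permutation action of $S_n$ on the tensor factors of $A$, but this is a direct computation on elementary tensors.
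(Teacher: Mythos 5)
Your argument is correct and is essentially identical to the paper's: both reduce to showing the map $H_c(\Gamma,V)^{\otimes n}\rtimes S_n\to \End_\CC(M^{\otimes n}\otimes N)$ is surjective by combining the surjections onto $\End_\CC(M)$ and $\End_\CC(N)$, and both rely on the key observation that the permutation action of $w\in S_n$ on $M^{\otimes n}$ is already realized by an element of $H_c(\Gamma,V)^{\otimes n}$ (your premultiplication by $\rho_M(s)^{-1}\otimes 1$). No substantive difference.
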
 
\begin{proof}
The proof of this is very nearly standard, but we indicate it briefly here for the reader's convenience. It suffices to show that the canonical map $$H_c(\Gamma,V)^{\otimes n} \rtimes S_n \longrightarrow \mathrm{End}_\CC(M^{\otimes n} \otimes N)$$ is surjective. But this follows from the surjectivity of the maps $H_c(\Gamma,V) \to \mathrm{End}_\CC(M)$, $\CC S_n \to  \mathrm{End}_\CC(N)$, and $\mathrm{End}_\CC(M)^{\otimes n} \otimes  \mathrm{End}_\CC(N) \to  \mathrm{End}_\CC(M^{\otimes n} \otimes N)$, upon noting that given $w \in S_n$ its action on $M^{\otimes n}$ may be achieved by some element of $H_c(\Gamma,V)^{\otimes n}$. 
\end{proof}

\subsection{Finite-dimensional representations of $H_c(\Gamma,V)$.} Let $M$ be a finite-dimensional $H_c(\Gamma,V)$-module. As above, its \emph{character} $\mathrm{ch}(M)$ the element of the Grothendieck ring $K_0(\CC \Gamma) \cong \ZZ^I$ represented by its class as a $\CC \Gamma$-module. 

Writing $\alpha \cdot c=\sum_{i \in I} c_i d_i$ if $\alpha=\sum_{i \in I} d_i \alpha_i$ for the usual dot product and with the notation above for the root system $R$ associated with the Dynkin diagram of $\Gamma$, we define
$$R_c=\{ \alpha \in R \ | \  \alpha \cdot c =0 \}$$ and let $\Sigma_c$ be the set of minimal positive elements of $R_c$. Then Corollary 3.5 and Theorem 7.4 of \cite{CBH} imply:
\begin{theorem}
The correspondence $M \mapsto \mathrm{ch}(M)$ defines a bijection from the set of isomorphism classes of finite-dimensional simple $H_c(\Gamma,V)$-modules to the set $\Sigma_c$. 
\end{theorem}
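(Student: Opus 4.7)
The plan is to reduce the classification to that of simple modules over a deformed preprojective algebra $\Pi^\lambda$ attached to the affine Dynkin diagram of $\Gamma$, and then invoke the reflection functors of \cite{CBH} to perform the classification.

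First, I would set up the dictionary. Using the symmetrizing idempotent $e \in \CC\Gamma$, one establishes (away from a bad locus in parameter space) a Morita equivalence $M \mapsto eM$ between $H_c(\Gamma,V)$-modules and modules over the spherical subalgebra $eH_c(\Gamma,V)e$, and then identifies the spherical subalgebra with a deformed preprojective algebra $\Pi^\lambda$, where $\lambda \in \CC^I$ is an explicit affine-linear reparametrization of $c$. Under this identification, the character $\mathrm{ch}(M) \in \ZZ^I_{\geq 0}$ of a finite-dimensional simple $H_c$-module $M$ corresponds to the dimension vector of the associated simple $\Pi^\lambda$-module (whose $i$th coordinate is the multiplicity of the $i$th irreducible $\CC\Gamma$-module in $M$), and the condition $\al \cdot c=0$ translates into the linear condition $\lambda \cdot \al = 0$.

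Second, I would prove the necessary direction $\mathrm{ch}(M) \in \Sigma_c$. The vanishing $\lambda \cdot \al = 0$ is forced by taking the trace of both sides of the defining relations of $\Pi^\lambda$ on $eM$: the trace of a sum of commutators is zero, while the trace of $\sum_i \lambda_i e_i$ is precisely $\lambda \cdot \al$. Minimality then follows from a composition-series argument: if $\al = \be + \ga$ with $\be, \ga$ positive roots and $\be \in R_c$, a composition factor with dimension vector $\be$ would violate simplicity of $M$.

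Third, for existence and uniqueness given $\al \in \Sigma_c$, I would apply the reflection functors $F_i$ of \cite{CBH}, defined for each vertex $i$ with $\lambda_i \neq 0$. These give equivalences $\Pi^\lambda\text{-mod} \to \Pi^{s_i \cdot \lambda}\text{-mod}$ which act on dimension vectors by the simple reflection $s_i$, matching simple modules bijectively. The base cases are the one-dimensional simples concentrated at a single vertex $i$ with $\lambda_i = 0$, giving the simple roots in $\Sigma_c$; for the remaining $\al \in \Sigma_c$ one finds a sequence of admissible reflections that reduces $\al$ to such a base case. The main obstacle is exactly this reduction: one must show that for every $\al \in \Sigma_c$ there is a reflection path to a base case that never crosses a ``wall'' $\lambda_i = 0$ at a moment when minimality of the reduced root would fail. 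This is a delicate combinatorial argument on the affine root system of $\Gamma$, intertwining the structure of positive roots with the geometry of walls $\{\lambda \cdot \al = 0\}$, and it is the technical heart of \cite{CBH}; once it is in place, both existence and uniqueness of a simple $\Pi^\lambda$-module (hence $H_c(\Gamma,V)$-module) for each $\al \in \Sigma_c$ drop out from the fact that the reflection functors are categorical equivalences.
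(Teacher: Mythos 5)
The paper does not actually prove this statement --- it is quoted directly as a consequence of Corollary 3.5 and Theorem 7.4 of \cite{CBH} --- and your outline is a faithful summary of exactly the argument given there: Morita equivalence with the deformed preprojective algebra, the trace argument forcing $\lambda\cdot\alpha=0$, and reflection functors for existence, uniqueness, and minimality, with the hard combinatorial reduction honestly deferred to \cite{CBH}. The one caveat is that your ``composition-series argument'' for minimality is too quick as stated (a decomposition $\alpha=\beta+\gamma$ into positive roots does not by itself produce a submodule or composition factor of dimension vector $\beta$); in \cite{CBH} that direction is likewise handled by the reflection-functor reduction, so this does not change the overall assessment.
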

As a corollary:
\begin{corollary} \label{finite dimensional corollary}
Let $\alpha$ be a positive real root. For generic elements $c$ of the hyperplane defined by $c \cdot \alpha=0$, there is a unique simple finite-dimensional $H_c(\Gamma,V)$-module $M$, and we have $\mathrm{ch}(M)=\alpha$. 
\end{corollary}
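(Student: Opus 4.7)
The plan is to deduce the corollary directly from the preceding theorem by showing that for generic $c$ on the hyperplane $H_\alpha = \{c \in \CC^I : c \cdot \alpha = 0\}$, the set $\Sigma_c$ reduces to the single element $\alpha$. Once this is established, the theorem's bijection between isomorphism classes of finite-dimensional simple $H_c(\Gamma,V)$-modules and $\Sigma_c$ yields immediately the desired unique module $M$ with $\mathrm{ch}(M) = \alpha$.

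The key observation is a proportionality statement for real roots. From \ref{root system}, $R$ is the affine root system attached to the Dynkin diagram of $\Gamma$, of $ADE$ type; its roots are the imaginary roots $n \delta$ with $n \in \ZZ \setminus \{0\}$ together with the real roots $n\delta + \beta_f$, where $\beta_f$ runs over the finite root system. I would prove (or cite from the general theory of affine Kac--Moody root systems) that the only elements of $R$ proportional to a fixed real root $\alpha$ are $\pm \alpha$: since $\alpha$ projects nontrivially to the finite quotient, it cannot be proportional to $\delta$ and so no imaginary root is proportional to it; and if $\lambda \alpha \in R$ for a rational $\lambda$, then projecting to the finite subsystem yields $\lambda \beta_f$ equal to another finite root, forcing $\lambda \in \{\pm 1\}$.

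With this in hand, for every $\beta \in R \setminus \{\pm \alpha\}$ the hyperplane $H_\beta = \{c : c \cdot \beta = 0\}$ is distinct from $H_\alpha$, so the intersection $H_\alpha \cap H_\beta$ is a proper linear subspace of $H_\alpha$. I would then take $c$ in the complement inside $H_\alpha$ of the countable union
\[
\bigcup_{\beta \in R \setminus \{\pm \alpha\}} (H_\alpha \cap H_\beta),
\]
which is dense in $H_\alpha$ because it is the complement of a countable union of proper subspaces. For any such $c$, by construction $R_c = \{\pm \alpha\}$, and since $\alpha$ is the unique positive element of this set, $\Sigma_c = \{\alpha\}$. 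The theorem quoted just above then supplies a unique isomorphism class of finite-dimensional simple $H_c(\Gamma,V)$-module $M$, and forces $\mathrm{ch}(M) = \alpha$.

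The only nontrivial step is the proportionality assertion for real roots in an affine simply-laced root system; apart from this standard fact, the argument is a direct unpacking of the bijection $M \mapsto \mathrm{ch}(M)$. I do not foresee any other obstacle, since no analysis of the algebra $H_c(\Gamma,V)$ itself is needed beyond what is packaged in the theorem.
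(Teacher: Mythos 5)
Your argument is correct and is exactly the intended deduction: the paper states this as an immediate consequence of the quoted theorem, relying on the same observation that for generic $c$ on the hyperplane $c\cdot\alpha=0$ one has $R_c=\{\pm\alpha\}$ (since no root other than $\pm\alpha$ is proportional to a real root $\alpha$ in a simply-laced affine root system), hence $\Sigma_c=\{\alpha\}$. The only point worth noting is that the union of subspaces you excise is countably infinite (one for each $n\delta+\beta_f$ not proportional to $\alpha$), which is still consistent with the usual meaning of ``generic'' here.
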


The corollary provides a large supply of non-commutative fibers. The largest character of such a non-commutative fiber is given by $\mathrm{ch}(M)=\delta+\phi$, where $\delta$ is the fundamental imaginary root and $\phi$ is the highest root of the finite root system associated with $\Gamma$. Recalling that $\delta=\mathrm{ch}(\CC \Gamma)$ and $\phi=\delta-\alpha_0$, the dimension of this $M$ is precisely 
$$\mathrm{dim}(M)=2|\Gamma|-1=g+1.$$ This shows that the dimension of the zero-fiber for $\Gamma$ is at least $g+1$. In the next section, we will observe that in fact this dimension is equal to $g+1$, and also generalize the preceding construction to the groups $W_n(\Gamma,\Delta)$. 

\section{Proof of the main results}

\subsection{Outline} Here we prove Theorem \ref{main}. We will begin with the case of a finite subgroup $\Gamma \leq \mathrm{SL}_2(\CC)$, proving that in this case the zero fiber ring is of dimension precisely $g+1$ and hence that the non-commutative fiber we constructed above is in fact a deformation of it. We then consider the groups $W_n(\Gamma,\Delta) \lhd W_n(\Gamma)$.

\subsection{Finite subgroups of $\HH^\times$} Let $V=\HH$, regarded as a right $\CC$-vector space via restriction of scalars, and let $\Gamma \subseteq \HH^\times$ be a finite subgroup acting on $V$ by left multiplication. Regarding $V$ as a $\CC$-vector space, the quotient space $V/\Gamma$ is known as a \emph{Kleinian singularity} (or alternatively, as a \emph{du Val singularity} or \emph{simple surface singularity}). The fibers of the quotient map are all reduced of cardinality $|\Gamma|$ except the zero-fiber. We will prove
\begin{theorem} \label{Kleinian case}
The degree of the zero-fiber of the map $V \to V/\Gamma$ is $2 |\Gamma| -1=g+1$. 
\end{theorem}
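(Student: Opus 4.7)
The lower bound $\delta(\Gamma)\geq 2|\Gamma|-1$ is already in hand from the discussion preceding the theorem: Corollary~\ref{finite dimensional corollary} applied to the positive real root $\alpha=\delta+\phi$ produces, for generic $c$ on the hyperplane $\alpha\cdot c=0$, a simple finite-dimensional $H_c(\Gamma,V)$-module $M$ with character $\mathrm{ch}(M)=\alpha=\alpha_0+\sum_{i\neq 0}2n_i\alpha_i$. The coefficient of $\alpha_0$ in $\alpha$ is $1$, so $\dim M^\Gamma=1$; simplicity of $M$ forces $H_c\cdot M^\Gamma=M$, making $M$ a non-commutative fiber in the sense of \ref{non-commutative fibers}, and Lemma~\ref{fiber lemma} delivers $\delta(\Gamma)\geq \dim M=1+\sum_{i\neq 0}2n_i^2=2|\Gamma|-1$.

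For the matching upper bound my plan is to decompose the zero-fiber ring $R=\CC[V]/B_+\CC[V]$, where $B=\CC[V]^\Gamma$, as a $\Gamma$-representation via the McKay decomposition $\CC[V]=\bigoplus_\rho\rho\otimes_\CC M_\rho$ of $\CC[V]$ as a $(\Gamma,B)$-bimodule, with $\rho$ running over irreducible $\CC\Gamma$-modules and $M_\rho=\Hom_\Gamma(\rho,\CC[V])$ the corresponding McKay module. Tensoring over $B$ with the residue field $B/B_+=\CC$ yields $R=\bigoplus_\rho\rho\otimes_\CC(M_\rho/B_+M_\rho)$, whence
\[\dim_\CC R=\sum_\rho\dim(\rho)\cdot\mu_B(M_\rho),\]
where $\mu_B(N):=\dim_\CC N/B_+N$ denotes the minimal number of $B$-module generators. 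Since $M_{\mathrm{triv}}=B$ one has $\mu_B(M_{\mathrm{triv}})=1$, so the upper bound reduces to showing $\mu_B(M_\rho)=2\dim(\rho)$ for every nontrivial irreducible $\rho$; granted this, one obtains $\dim_\CC R=1+\sum_{\rho\neq\mathrm{triv}}2(\dim\rho)^2=1+2(|\Gamma|-1)=2|\Gamma|-1$, matching the lower bound.

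The identity $\mu_B(M_\rho)=2\dim(\rho)$ is the classical assertion that each indecomposable non-free maximal Cohen--Macaulay module over a Kleinian singularity $B$ is an \emph{Ulrich} module: its minimal number of generators equals the multiplicity of $B$ at its singular point (which is $2$) times the rank (which is $\dim(\rho)$ for $M_\rho$, by the McKay correspondence). This is the main obstacle of the plan, and I would handle it in either of two ways: (i) appeal to the theory of maximal Cohen--Macaulay modules over rational double points, where the Ulrich property is well known and may be read off the explicit matrix factorizations of the equation defining $B\subseteq\CC^3$; or (ii) perform a direct Hilbert series computation, using the minimal free resolution $0\to P^a\to P^b\to\CC[V]\to 0$ of $\CC[V]$ over $P=\CC[u_1,u_2,u_3]$ (where $u_i$ maps to the degree-$d_i$ generator $f_i$ of $B$, and $\phi$ denotes the degree-$e$ defining equation of $B$), which yields $\sum t^{b_i}-\sum t^{a_j}=(1-t)[d_1]_t[d_2]_t[d_3]_t$; palindromy of the $[d_i]_t$ together with unimodality of their product identifies $\dim_\CC R=b$ with the maximum coefficient of $[d_1]_t[d_2]_t[d_3]_t$, which equals $2|\Gamma|-1$ by a short case check along Cohen's ADE classification using the constraints $d_1+d_2+d_3=e+2$ and $d_1d_2d_3=e|\Gamma|$.
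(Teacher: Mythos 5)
Your lower bound is exactly the paper's: the simple $H_c(\Gamma,V)$-module $M$ with $\mathrm{ch}(M)=\delta+\phi$ furnished by Corollary~\ref{finite dimensional corollary} has a one-dimensional $\Gamma$-invariant subspace (the coefficient of $\alpha_0$ in $\delta+\phi$ is $1$) which generates $M$ by simplicity, so Lemma~\ref{fiber lemma} gives $\delta(\Gamma)\geq\dim M=2|\Gamma|-1$. For the upper bound you take a genuinely different route. The paper proceeds case by case through the $ADE$ classification, exhibiting explicit elements of the invariant ideal whose initial terms already cut the dimension down to $2|\Gamma|-1$ (and thereby producing Groebner bases, which is part of the authors' stated motivation). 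You instead use the $(\Gamma,B)$-bimodule decomposition $\CC[V]=\bigoplus_\rho \rho\otimes M_\rho$ to reduce the count to $\sum_\rho\dim(\rho)\,\mu_B(M_\rho)$ and appeal to the structure of maximal Cohen--Macaulay modules over rational double points. This is legitimate and uniform: the identity $\mu_B(M_\rho)=2\dim\rho$ for nontrivial $\rho$ is exactly the content of McKay's computation of the graded character of the zero-fiber ring, which the authors explicitly acknowledge as an alternative proof (``the theorem is of course easy to deduce from the work of McKay''). In fact your route can be made self-contained more cheaply than you suggest: for the \emph{upper} bound you do not need the Ulrich equality, only the standard inequality $\mu_B(N)\leq e(N)=e(B)\cdot\mathrm{rank}(N)=2\,\mathrm{rank}(N)$ valid for any MCM module $N$ over the CM domain $B$ of multiplicity $2$; this gives $\dim R\leq 1+2\sum_{\rho\neq\mathrm{triv}}(\dim\rho)^2=2|\Gamma|-1$, and equality is then forced by your lower bound. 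What you gain is a uniform, conceptual explanation of the number $2|\Gamma|-1$; what the paper gains is the explicit Groebner bases.

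One caveat on your fallback (ii): the Hilbert series identity $\sum t^{b_i}-\sum t^{a_j}=(1-t)[d_1]_t[d_2]_t[d_3]_t$ only shows that the number $b$ of generators is \emph{at least} the maximum coefficient of $[d_1]_t[d_2]_t[d_3]_t$, because a generator and a relation of the minimal resolution occurring in the same degree cancel in the alternating sum. Palindromy and unimodality separate the degrees in which the net coefficients are positive from those in which they are negative, but they do not exclude such hidden cancellation, so as written (ii) yields a bound in the wrong direction for your purposes. It can be repaired (e.g.\ using the self-duality $\{a_j\}=\{e-b_i\}$ of the resolution of the Gorenstein $P$-module $\CC[V]$ to pin down the degrees), but this requires an extra argument; route (i) is the one to rely on.
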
 The theorem is of course easy to deduce from the work of McKay \cite{McK3} on the graded character of the zero-fiber ring. We present a proof here for the reader's convenience; our interest in this particular formulation is the existence of a single irreducible non-commutative fiber with precisely the dimension of the zero-fiber ring. Already in this case, it would be very interesting to have an \emph{a priori} explanation for the coincidence of the dimensions. Additionally, the proof we give provides an explicit Groebner basis for the ideal generated by the positive degree $\Gamma$-invariants.

\begin{proof} The proof is case by case. In each case we will find a set $S$ of polynomials in the ideal $I$ generated by the $\Gamma$-invariant polynomials of positive degree such that the quotient of $\CC[x,y]$ by the ideal $\mathrm{in}(I)$ generated by the initial terms of the elements of $S$ is of dimension $2 |\Gamma|-1$. This gives the desired upper bound on the dimension of the zero-fiber ring, and when combined with the lower bound that follows from our previous construction of a non-commutative fiber of dimension $g+1$, establishes that the set $S$ is, in each case, a Groebner basis of $I$. Throughout we use the lexicographic order on monomials, so that $x^a y^b$ comes before $x^c y^d$ if $a>c$ or if $a=c$ and $b>d$. 

\subsection{Cyclic groups}

Let $\zeta$ be a primitive $\ell$th root of $1$ and let $\Gamma$ be the subgroup of $\mathrm{SL}_2(\CC)$ generated by the matrix
$$w=\left( \begin{matrix} \zeta & 0 \\ 0 & \zeta^{-1} \end{matrix} \right).$$ The action of $w$ on the ring $\CC[x,y]$ of polynomial functions on $\CC^2$ is given by
$$w \cdot x=\zeta^{-1} x \quad \text{and} \quad w \cdot y=\zeta y.$$

It follows that the polynomials $x^\ell,xy$ and $y^\ell$ are $\Gamma$-invariant. The quotient by the ideal they generated is spanned by the monomials $1,x,\dots,x^{\ell-1},y,y^2,\dots,y^{\ell-1}$ and hence has dimension at most $2 \ell -1=2|\Gamma|-1$.

\subsection{Binary dihedral groups $\mathrm{BD}_{2n}$ }

We write $\zeta$ for a primitive $2n$th root of $1$, and let
$$w_1=\left( \begin{matrix} \zeta & 0 \\ 0 & \zeta^{-1} \end{matrix} \right) \quad \text{and} \quad w_2=\left( \begin{matrix} 0 & i \\ i & 0 \end{matrix} \right).$$

We consider the polynomials
$$\phi_1=x^n+y^n, \quad \phi_2=x^n -y^n, \quad \text{and} \quad \phi_3=xy.$$ Direct calculation shows that these are semi-invariants with 
$$w_1 \cdot \phi_1=-\phi_1, \ w_2 \cdot \phi_1=(-i)^n \phi_1, \quad w_1 \cdot \phi_2=-\phi_2, \ w_2 \cdot \phi_2=-(-i)^n \phi_2, \quad w_1 \cdot \phi_3=\phi_3, \ \text{and} \ w_2 \cdot \phi_3=-\phi_3.$$

Hence if $n$ is even then the polynomials
$$f_1=\phi_3^2, \quad f_2=\phi_2^2, \quad \text{and} \quad f_3=\phi_1 \phi_2 \phi_3$$ are $\Gamma$-invariant, while if $n$ is odd then the polynomials 
$$g_1=\phi_3^2, \quad g_2=\phi_1 \phi_2, \quad \text{and} \quad g_3=\phi_2^2 \phi_3$$ are $\Gamma$-invariant.

We first assume $n$ is even, and in particular $n \geq 2$. We have
$$f_1=x^2y^2, \quad f_2=x^{2n}-2 x^n y^n+y^{2n}, \quad \text{and} \quad f_3=xy(x^{2n}-y^{2n}).$$ Hence the polynomial
$$y^{2n+2}=y^2 f_2-(x^{2n-2}-2 x^{n-2} y^n) f_1$$ belongs to $I$. Likewise
$$xy^{2n+1}=\frac{1}{2} \left(xy f_2-f_3+2x^{n-1} y^{n-1} f_1 \right)$$ belongs to $I$. The quotient by the ideal $J$ generated by the leading terms of the set
$$S=\{f_1,f_2,y^{2n+2},xy^{2n+1} \}$$ is spanned by
$$\{1,x,\dots,x^{2n-1},y,y^2,\dots,y^{2n+1}, xy,x^2y,\dots,x^{2n-1}y,xy^2,\dots,xy^{2n} \}$$ and is hence of dimension at most 
$$2n+(2n+1)+2n-1+2n-1=8n-1=2|\Gamma|-1.$$

Next we assume $n$ is odd. The invariant polynomials here are
$$g_1=x^2y^2, \quad g_2=x^{2n}-y^{2n}, \quad \text{and} \quad g_3=xy(x^{2n}-2x^ny^2+y^{2n}).$$ Since
$$y^{2n+2}=x^{2n-2} g_1-y^2g_2$$ we have $y^{2n+2} \in I$. And since
$$xy^{2n+1}=\frac{1}{2} \left(g_3-2x^{n-1} y^{n-1} g_1-xy g_2 \right)$$ we have $xy^{2n+1} \in I$. Thus again the desired dimension is bounded by $8n-1=2 |\Gamma|-1$. 

\subsection{The binary tetrahedral group} Here we follow \cite{Dol}, starting on page 8, and consider the binary tetrahedral group $\Gamma$ generated by the matrices
$$w_1=\left( \begin{matrix} i & 0 \\ 0 & -i \end{matrix} \right), \quad w_2=\left( \begin{matrix} 0 & i \\ i & 0 \end{matrix} \right), \quad \text{and} \quad 
w_3=\frac{1}{1-i}\left( \begin{matrix} 1 & i \\ 1 & -i \end{matrix} \right).$$ Direct calculation shows that
$$\phi_1=xy(x^4-y^4), \quad \phi_2=x^4+2 \sqrt{-3} x^2y^2+y^4, \quad \text{and} \quad \phi_3=x^4-2 \sqrt{-3} x^2y^2+y^4$$ are semi-invariants for $\Gamma$, and then that
$$f_1=\phi_1=xy(x^4-y^4), \quad f_2=\phi_2 \phi_3= x^8+14x^4y^4+y^8 \quad \text{and} \quad f_3=\frac{1}{2}(\phi_2^3+\phi_3^3)=x^{12}-33x^8y^4-33 x^4 y^8+y^{12}$$ are invariants for $\Gamma$. 

We define
$$g_1=yf_2-x^3f_1=15x^4y^5+y^9 \quad \text{and} \quad g_2=xg_1-15y^4f_1=xy^9,$$ so that $g_1,g_2 \in I$, and then put
$$h=f_3+(47y^4-x^4)f_2=624x^4y^8+48y^{12}.$$ Finally, defining
$$g_3=5h-208y^3g_1=32y^{12} \in I$$ we have obtained the desired set
$$S=\{f_1,f_2,g_1,g_2,g_3\}$$ whose initial terms $$\{x^5y,x^8,x^4y^5,xy^9,y^{12}\}$$ generate an ideal of codimension $47=2|\Gamma|-1$.

\subsection{The binary octahedral group} Here we follow \cite{Dol}, starting on page 9, and consider the binary octahedral group $\Gamma$ generated by the matrices
$$w_1=\left( \begin{matrix} e^{2 \pi i/8} & 0 \\ 0 & e^{-2\pi i /8} \end{matrix} \right), \quad w_2=\left( \begin{matrix} 0 & i \\ i & 0 \end{matrix} \right), \quad \text{and} \quad 
w_3=\frac{1}{1-i}\left( \begin{matrix} 1 & i \\ 1 & -i \end{matrix} \right).$$ Since the binary tetrahedral group is a normal subgroup of $\Gamma$, it is not surprising that its invariants
$$\phi_1=xy(x^4-y^4), \quad \phi_2=x^8+14x^4y^4+y^8, \quad \text{and} \quad \phi_3=x^{12}-33x^8y^4-33x^4y^8+y^{12}$$ are semi-invariants for $\Gamma$. Direct calculation then shows that
$$f_1=\phi_1^2=x^{10} y^2-2x^6y^6+x^2 y^{10}, \quad f_2=\phi_2=x^8+14x^4y^4+y^8, \quad \text{and} \quad f_3=\phi_1 \phi_3=x^{17}y-34x^{13}y^5+34x^5y^{13}-x y^{17}$$ are $\Gamma$ invariants. 

We define
$$g_1=\frac{1}{16}(x^2y^2 f_2-f_1)=x^6 y^6 \in I$$ and then
$$g_2=y^6f_2-x^2 g_2=14x^4 y^{10}+y^{14} \in I \quad \text{and} \quad g_3=x^2y^6(x^8+14x^4y^4+y^8)-(x^4+14y^4)g_2=x^2y^{14} \in I.$$ Next we observe that, working modulo $g_2=x^6 y^6$, we have
$$y^{10}(x^8+14x^4+y^8)=14 x^4 y^{14}+y^{18}$$ and, still working modulo $g_2=x^6 y^6$,
$$14 x^4y^{14}+y^{18}-14 x^2 y^4 f_1=y^{18} \in I.$$ Thus we define $g_4=y^{18} \in I$. Finally we observe that
$$g_5=(7x^9y-336x^5y^5+41 x y^9)f_2+4656x^3y^3 g_2-7 f_3=48xy^{17} \in I.$$ Hence the set
$$S=\{f_2,g_1,g_2,g_3,g_4,g_5\}$$ is contained in $I$, with initial terms the set
$$\{x^8,x^6y^6,x^4y^{10},x^2y^{14},xy^{17},y^{18} \}.$$ It follows that the degree of the zero fiber here is at most $95=2|\Gamma|-1$.

\subsection{The binary icosahedral group} Here we follow \cite{Dol}, starting on page 9. We take $\zeta=e^{2 \pi i /5}$ a primitive $5$th root of unity, and let $\Gamma$ to be the group generated by 
$$w_1=\left( \begin{matrix} e^{2 \pi i/10} & 0 \\ 0 & e^{-2\pi i /10} \end{matrix} \right), \quad w_2=\left( \begin{matrix} 0 & i \\ i & 0 \end{matrix} \right), \quad \text{and} \quad 
w_3=\frac{1}{\sqrt{5}}\left( \begin{matrix} \zeta-\zeta^4 & \zeta^2-\zeta^3 \\ \zeta^2-\zeta^3 & -\zeta+\zeta^4 \end{matrix} \right).$$ This group is perfect, so its semi-invariants are invariants, and as in \cite{Dol} we observe that the polynomials
$$f_1=xy(x^{10}+11x^5y^5-y^{10}), \quad f_2=-(x^{20}+y^{20})+228(x^{15}y^5-x^5y^{15})-494x^{10}y^{10},$$ and $$f_3=x^{30}+y^{30}+522(x^{25}y^5-x^5y^{25})-10005(x^{20}y^{10}+x^{10}y^{20})$$ are semi-invariants (and hence invariants). We put
$$g_1=(x^9-239x^4y^5)f_1+yf_2=-3124x^{10}y^{11}+11x^5y^{16}-y^{21} \in I$$ and
$$g_2=(x^{10}-239x^5y^5+3124y^{10})f_1+xyf_2=34375x^6y^{16}-3125xy^{21} \in I.$$ Next we set
$$g_3=\frac{1}{140}\left(3124^2(y^6f_2+x^9y^5f_1)+(3124 \cdot 239x^5-1543751y^5)g_1\right)=-16020500x^5y^{21}-58683y^{26}$$ and then
$$g_4=\frac{-1}{52081300000}(16020500y^5 g_2+34375xg_3)=xy^{26}.$$ Finally we set
$$h_1=f_3+x^{10}f_2-(750x^{14}y^4-18749x^9y^9)f_1=206761x^{15}y^{15}-28755x^{10}y^{20}-522x^5y^{25}+y^{30} \in I,$$ 
$$h_2=3124h_1+206761x^5y^4g_1=-87556200x^{10}y^{20}-1837490x^5y^{25}+3124y^{30} \in I,$$ and observe that modulo $g_2$ and $g_3$, the polynomial $h$ is a non-zero multiple of $g_5=y^{30}$, which is therefore in $I$. Thus the set
$$S=\{f_1,f_2,g_1,g_2,g_3,g_4,g_5\}$$ is a subset of $I$ with leading terms
$$\{x^{11}y,x^{20},x^{10}y^{11},x^6y^{16},x^5y^{21},xy^{26},y^{30} \}.$$ It follows as above that the dimension of the zero fiber is at most $239=2|\Gamma|-1$. This completes the proof of Theorem \ref{Kleinian case}, and hence Theorem \ref{main} in case $n=1$. \end{proof}

\subsection{Spaces of semi-invariants} We begin with a very general lemma, whose proof follows easily from Clifford theory.
\begin{lemma}
Let $G$ be a finite group, let $N \lhd G$ be a normal subgroup such that $G/N$ is abelian, let $\chi:G \to \CC^\times$ be a linear character, and let $M$ be an irreducible $\CC G$-module such that the restriction of $M$ to $N$ contains a summand isomorphic to the restriction of $\chi$ to $N$. Then $M$ is one-dimensional, and hence given by a linear character $\psi:G \to \CC^\times$ with restriction to $N$ equal to $\chi$ restricted to $N$.
\end{lemma}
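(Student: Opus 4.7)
The plan is to reduce immediately to the case where $\chi$ is the trivial character, by twisting. Form the irreducible $\CC G$-module $M' = M \otimes \chi^{-1}$. Then $\mathrm{Res}_N^G M'$ contains a copy of the trivial character of $N$; in other words, the space $(M')^N$ of $N$-invariants is nonzero. So it suffices to prove the lemma in the case $\chi = \one$, i.e., to show that an irreducible $\CC G$-module $M'$ with $(M')^N \neq 0$ must be one-dimensional when $G/N$ is abelian.

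The key observation is that $(M')^N$ is in fact a $G$-submodule of $M'$, not just an $N$-submodule. This uses precisely the normality of $N$: given $g \in G$, $n \in N$, and $m \in (M')^N$, we have $n(gm) = g(g^{-1}ng)m = gm$, since $g^{-1}ng \in N$ acts trivially on $m$. Hence $(M')^N$ is stable under $G$.

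By irreducibility of $M'$, the nonzero $G$-submodule $(M')^N$ must equal all of $M'$, so $N$ acts trivially on $M'$. Therefore $M'$ factors through the quotient $G/N$, which is abelian, so every irreducible representation of $G/N$ is one-dimensional. Consequently $\dim M' = 1$, so $\dim M = 1$ as well, and $M$ is afforded by some linear character $\psi: G \to \CC^\times$. Since $M' = M \otimes \chi^{-1}$ restricts trivially to $N$, we conclude $\psi|_N = \chi|_N$, as claimed. No step is really an obstacle here; the only point worth emphasizing is the stability of $(M')^N$ under $G$, which is where normality of $N$ is crucially used.
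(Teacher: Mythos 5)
Your proof is correct and takes essentially the same route as the paper's: twist by $\chi^{-1}$ and use that every irreducible representation of the abelian group $G/N$ is one-dimensional. The only difference is that where the paper invokes Clifford theory (the restriction of $M$ to $N$ is a sum of $G$-conjugates of $\chi|_N$, all equal to $\chi|_N$ since $\chi$ extends to $G$), you prove the needed special case directly by checking that $(M')^N$ is $G$-stable via normality of $N$ --- a self-contained, slightly more elementary rendering of the same argument.
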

\begin{proof}
By Clifford theory, the restriction of $M$ to $N$ is a sum of conjugates of $\chi$ restricted to $N$. But since $\chi$ is a character of $G$, it is conjugation-invariant, so the restriction of $M$ to $N$ is a sum of copies of $\chi|_N$, implying that $N$ acts via $\chi$ on $M$. Hence $N$ acts trivially on $M \otimes \chi^{-1}$, which is therefore an irreducible $G/N$ module. Since $G/N$ is abelian, it follows that $M$ is one-dimensional.
\end{proof}

Now fix a pair $(\chi,\eta)$ consisting of a linear character $\chi$ of $\Gamma$ and a linear character $\eta$ of $S_n$. This pair determines a linear character $(\chi,\eta)$ of $W_n(\Gamma)$ by the rule
$$(\chi,\eta)(\gamma_1^{(1)} \cdots \gamma_n^{(n)} w)=\eta(w) \cdot \prod_{i=1}^n \chi(\gamma_i),$$ and hence upon restriction to $W_n(\Gamma,\Delta)$, a linear character of $W_n(\Gamma,\Delta)$. Different pairs $(\chi,\eta)$ define different characters of $W_n(\Gamma)$, and all linear characters of $W_n(\Gamma)$ are of this form. Two pairs $(\chi,\eta)$ and $(\chi',\eta')$ restrict to the same character of $W_n(\Gamma,\Delta)$ if and only if $\eta=\eta'$ and the restrictions of $\chi$ and $\chi'$ to $\Delta$ are equal. Moreover, if $n \geq 3$ then all characters of $W_n(\Gamma,\Delta)$ are of this form. The following lemma is the key input in order to apply Lemmas \ref{fiber lemma} and \ref{normal subgroup fiber lemma}. In what follows, for a finite group $G$, an isoclass $\mu$ of simple representations of $\CC G$, and a $\CC G$-module $M$, we write $M^\mu$ for the sum of all submodules of $M$ of type $\mu$, and we write $G^\vee$ for the group of linear characters of $G$.
\begin{lemma} \label{semi-invariants lemma}
With notation as above, given a $\CC \Gamma$-module $L$, the space of $(\chi,\eta)$-relative invariants for the action of $W_n(\Gamma,\Delta)$ on $L^{\otimes n} \otimes \mathrm{det}$ is
$$(L^{\otimes n} \otimes \mathrm{det})^{(\chi,\eta)}=\begin{cases} \bigoplus\limits_{\substack{\psi \in \Gamma^\vee \\ \psi|_\Delta=\chi}} \Lambda^n (L^\psi) \quad \hbox{if $\eta=\mathrm{triv}$, and} \vspace{.05 in} \\  \bigoplus\limits_{\substack{\psi \in \Gamma^\vee \\ \psi|_\Delta=\chi}} \mathrm{Sym}^n (L^\psi) \quad \hbox{if $\eta=\mathrm{det}$.}\end{cases}$$
\end{lemma}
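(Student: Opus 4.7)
The plan is to invoke the preceding Clifford-theoretic lemma to reduce the computation to $W_n(\Gamma)$-isotypic components of $L^{\otimes n}\otimes\det$, and then exploit the wreath product structure $W_n(\Gamma) = \Gamma^n \rtimes S_n$ to evaluate each such component.

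First I would observe that $W_n(\Gamma,\Delta) \lhd W_n(\Gamma)$ with abelian quotient $\Gamma/\Delta$, and that any $W_n(\Gamma,\Delta)$-character of the form $(\chi,\eta)$ extends to $W_n(\Gamma)$ by choosing $\psi \in \Gamma^\vee$ with $\psi|_\Delta = \chi|_\Delta$. Applying the preceding lemma with $G = W_n(\Gamma)$ and $N = W_n(\Gamma,\Delta)$, any irreducible $W_n(\Gamma)$-constituent of $L^{\otimes n}\otimes\det$ whose restriction to $W_n(\Gamma,\Delta)$ contains $(\chi,\eta)$ is one-dimensional and of the form $(\psi,\eta)$ for such a $\psi$. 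Decomposing into $W_n(\Gamma)$-isotypic components therefore yields
$$(L^{\otimes n}\otimes\det)^{(\chi,\eta)} \;=\; \bigoplus_{\substack{\psi\in\Gamma^\vee \\ \psi|_\Delta = \chi}} (L^{\otimes n}\otimes\det)^{(\psi,\eta)},$$
the right-hand side now referring to isotypics for the larger group $W_n(\Gamma)$.

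Next I would compute each of these isotypics using $W_n(\Gamma) = \Gamma^n \rtimes S_n$. The character $(\psi,\eta)$ restricts to $\psi^{\otimes n}$ on $\Gamma^n$, while $\det$ restricts trivially to $\Gamma^n$ and to $\mathrm{sgn}$ on $S_n$. Hence the $\psi^{\otimes n}$-isotypic of $L^{\otimes n}\otimes\det$ for $\Gamma^n$ is $(L^\psi)^{\otimes n}\otimes\det$, where $L^\psi \subseteq L$ denotes the $\psi$-isotypic for $\Gamma$. On this space $S_n$ acts by permutation of tensor factors twisted by the sign character, so taking the $\eta$-isotypic produces the sign-isotypic $\Lambda^n(L^\psi)$ of $(L^\psi)^{\otimes n}$ when $\eta = \mathrm{triv}$, and the trivial isotypic $\mathrm{Sym}^n(L^\psi)$ when $\eta = \det$. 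Summing over $\psi$ gives the two displayed cases.

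The hard part will really only be the first step: making sure the Clifford-theoretic reduction is correctly set up so that every $(\chi,\eta)$-summand for $W_n(\Gamma,\Delta)$ comes from a one-dimensional $W_n(\Gamma)$-constituent, and that distinct $\psi$ contribute disjointly. Once the abelianness of $\Gamma/\Delta$ is invoked through the preceding lemma this becomes routine, and the remainder is standard wreath-product isotypic bookkeeping; in particular the symmetric-versus-alternating dichotomy in the conclusion is entirely the contribution of $\det|_{S_n} = \mathrm{sgn}$.
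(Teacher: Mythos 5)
Your proof is correct and rests on the same two ingredients as the paper's: the preceding Clifford-theoretic lemma, followed by wreath-product isotypic bookkeeping in which $\det|_{S_n}=\mathrm{sgn}$ produces the $\Lambda^n$ versus $\mathrm{Sym}^n$ dichotomy. The one organizational difference is where the Clifford lemma is deployed: the paper applies it to the pair $\Gamma^{\times n}_\Delta \lhd \Gamma^{\times n}$ acting on the individual summands $L_{i_1}\otimes\cdots\otimes L_{i_n}$ of $L^{\otimes n}$, and then checks by hand, using elements such as $\gamma^{(1)}(\gamma^{-1})^{(2)}$ and $\gamma^{(1)}$ for $\gamma\in\Delta$, that the resulting linear characters $\chi_1,\dots,\chi_n$ all coincide and restrict to $\chi$ on $\Delta$; you instead apply it once to $W_n(\Gamma,\Delta)\lhd W_n(\Gamma)$ acting on all of $L^{\otimes n}\otimes\det$, which absorbs those explicit verifications into the description (given just before the lemma) of the linear characters $(\psi,\eta)$ of $W_n(\Gamma)$ and of when two of them restrict to the same character of $W_n(\Gamma,\Delta)$. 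Both routes are complete; yours is marginally more economical, at the cost of leaning on that classification of linear characters of the wreath product, which the paper's element-by-element computation effectively reproves in situ.
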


\begin{proof}
We put $$\Gamma^{\times n}_\Delta=\{(\gamma_1,\dots,\gamma_n) \in \Gamma^{\times n} \ | \ \gamma_1 \gamma_2 \cdots \gamma_n=1 \ \mathrm{mod} \ \Delta \},$$ which is a normal subgroup of $\Gamma^{\times n}$. For a representation $L$ of $\Gamma$ we compute the $W_n(\Gamma,\Delta)$ $(\chi,\eta)$-semi-invariants in $L^{\otimes n} \otimes \mathrm{det}$ as follows:  first we compute the $\Gamma^{\times n}_\Delta$ $\chi$-semi-invariants in $L^{\otimes n}$, and then take $S_n$ invariants or anti-invariants as appropriate. Suppose that $$L=\bigoplus_{j=1}^m L_j$$ with each $L_j$ an irreducible $\Gamma$-submodule of $L$. Now as a $G=\Gamma^{\times n}$-module, $L^{\otimes n}$ decomposes as 
$$L=\bigoplus_{1 \leq i_1,i_2,\dots,i_n \leq m} L_{i_1} \otimes L_{i_2} \otimes \cdots \otimes L_{i_n}.$$ By applying the previous lemma with $N=\Gamma^{\times n}_\Delta$, $M=L_{i_1} \otimes \cdots \otimes L_{i_n}$, and the character of $G$ induced by $\chi$, the only summands that contribute to the semi-invariants we are after must have all $L_{i_j}$ one-dimensional. Let $\chi_j$ be the linear character corresponding to $L_{i_j}$. Assuming that $\ell_1 \otimes \cdots \otimes \ell_n \in M$ is a $\chi$-semi-invariant we obtain
$$\ell_1 \otimes \cdots \ell_n=\gamma^{(1)} (\gamma^{-1})^{(2)} \cdot \ell_1 \otimes \ell_2 \otimes \cdots \ell_n=\chi_1(\gamma) \chi_2(\gamma^{-1}) \ell_1 \otimes \ell_2 \otimes \cdots \ell_n \quad \hbox{for all $\gamma \in \Gamma$,}$$ implying $\chi_1=\chi_2$. Similarly $\chi_i=\chi_j$ for all $i,j$. Likewise, for $\gamma \in \Delta$ we have
$$\chi(\gamma) \ell_1 \otimes \cdots \otimes \ell_n=\gamma^{(1)} \cdot \ell_1 \otimes \cdots \otimes \ell_n=\chi_1(\gamma) \ell_1 \otimes \cdots \otimes \ell_n,$$ implying that $\chi$ and $\chi_1$ have the same restriction to $\Delta$. Finally, by taking $S_n$-invariants or anti-invariants, as the case may be, we obtain the result.
\end{proof}

\subsection{Proof of Theorem \ref{main}} We have already proved the last assertions of the theorem, dealing with the rank one groups. The remainder of the theorem is a consequence of the following more precise version combined with Cohen's \cite{Coh} classification.
\begin{theorem}
Let $n \geq 2$ be an integer, let $\Gamma \leq \HH^\times$ be a non-trivial finite subgroup, and let $\Delta \leq \Gamma$ be a normal subgroup such that $\Gamma/\Delta$ is abelian. Then the zero-fiber ring of $W_n(\Gamma,\Delta)$ admits a $(g+1)^n$-dimensional quotient ring, where
$$g=(n-1)|\Gamma|+2(|\Delta|-1).$$
\end{theorem}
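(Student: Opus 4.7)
The strategy is to apply Lemma \ref{normal subgroup fiber lemma} to the deformation $A = H_c(\Gamma,V)^{\otimes n} \rtimes S_n$ of $\CC[V^{\times n}] \rtimes W_n(\Gamma)$. It is enough to exhibit an $A$-module $M$ of dimension $(g+1)^n$ with $\dim_\CC M^{W_n(\Gamma,\Delta)} = 1$: the generation hypothesis is automatic, since Lemma \ref{irreducibility lemma} will guarantee that $M$ is irreducible, so that any nonzero subspace generates it.

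The module we aim to construct has the form $M = L^{\otimes n} \otimes N$, where $L$ is an irreducible finite-dimensional $H_c(\Gamma,V)$-module of dimension exactly $g+1 = (n-1)|\Gamma| + 2|\Delta| - 1$ and $N$ is a one-dimensional $\CC S_n$-module. Corollary \ref{finite dimensional corollary} produces such an $L$ provided we can locate a positive real root $\alpha = \sum_i a_i \alpha_i$ of the affine Dynkin diagram of $\Gamma$ with $\sum_i a_i n_i = g+1$; at generic points of the hyperplane $c \cdot \alpha = 0$, this root is the character of a unique simple module $L = L(\alpha)$. Lemma \ref{semi-invariants lemma}, applied with $N$ equal to the sign representation so that the relevant formula is $\bigoplus_{\psi|_\Delta = \mathrm{triv}} \Lambda^n(L^\psi)$, then translates the condition $\dim_\CC M^{W_n(\Gamma,\Delta)} = 1$ into the combinatorial requirement that among the vertices $i$ of the Dynkin diagram corresponding to linear characters $\psi$ of $\Gamma$ trivial on $\Delta$ (the ``$\Delta$-trivial vertices''), exactly one has coefficient $a_i = n$ and every other has coefficient strictly less than $n$.

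A uniform candidate is $\alpha = (n-1)\delta + \beta$ for a suitable positive finite root $\beta$. Writing $\beta = \sum_{l \neq 0} b_l \alpha_l$, the dimension equation becomes $\beta \cdot n = 2|\Delta|-1$, the coefficient at vertex $0$ (always $\Delta$-trivial) is $a_0 = n-1 < n$, and at each other linear-character vertex $l$ it is $a_l = n-1 + b_l$. Thus the combinatorial condition becomes: the support of $\beta$ should meet the set of non-trivial $\Delta$-trivial linear-character vertices in exactly one vertex, and the coefficient of $\beta$ there should equal $1$. When $\Delta = \Gamma$ the only $\Delta$-trivial character is the trivial one, and the simpler choice $\alpha = n\delta + \phi$ (the highest-root shift already used in the Kleinian case) handles the situation. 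For cyclic $\Gamma = \ZZ/k$ the root $\beta = \alpha_1 + \alpha_2 + \cdots + \alpha_{2|\Delta|-1}$ works: its support has length $2|\Delta|-1$ and meets the multiples of $|\Delta|$ in the single vertex $|\Delta|$, giving $\beta \cdot n = 2|\Delta|-1$ as required.

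The main obstacle is to carry out this selection of $\beta$ for each binary dihedral and binary polyhedral $\Gamma$ and each normal subgroup $\Delta$ with $\Gamma/\Delta$ abelian. This requires a case-by-case examination of the affine Dynkin diagrams $\tilde{D}$ and $\tilde{E}_{6,7,8}$, keyed to the correspondence between their mark-$1$ vertices and the linear characters of $\Gamma$, together with the structure of the normal subgroup lattice (which is determined by the subgroups of $\Gamma^{\mathrm{ab}}$). Once a positive real root $\alpha$ with the required dimension and profile is exhibited in every case, Lemma \ref{irreducibility lemma} together with Lemma \ref{normal subgroup fiber lemma} packages this into the conclusion: the irreducible $A$-module $M = L(\alpha)^{\otimes n} \otimes N$ has dimension $(g+1)^n$ and one-dimensional space of $W_n(\Gamma,\Delta)$-invariants, so its associated graded is a $(g+1)^n$-dimensional quotient of the zero-fiber ring of $W_n(\Gamma,\Delta)$.
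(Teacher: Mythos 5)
Your framework is exactly the paper's: the deformation $H_c(\Gamma,V)^{\otimes n}\rtimes S_n$, the candidate module $L^{\otimes n}\otimes\mathrm{det}$ with $\mathrm{ch}(L)=(n-1)\delta+\beta$, Corollary~\ref{finite dimensional corollary} to produce $L$, Lemma~\ref{irreducibility lemma} for irreducibility (hence the generation hypothesis), Lemma~\ref{semi-invariants lemma} to reduce one-dimensionality of the invariants to a condition on the coefficients of $\mathrm{ch}(L)$ at the mark-one vertices, and Lemma~\ref{normal subgroup fiber lemma} to pass to the zero-fiber ring. Your combinatorial translation --- $\sum_{l}b_ln_l=2|\Delta|-1$ together with the requirement that the support of $\beta$ meet the non-trivial $\Delta$-trivial mark-one vertices in a single vertex with coefficient $1$ --- is the paper's condition, correctly derived. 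Your explicit root $\alpha_1+\cdots+\alpha_{2|\Delta|-1}$ for cyclic $\Gamma$ is a nice self-contained touch; the paper instead disposes of abelian $\Gamma$ by citing the complex-reflection-group results of \cite{Gri4} (modulo the reducible group $G(2,2,2)$).

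The gap is that you stop exactly where the theorem has content. It is not automatic that a finite positive root satisfies the dimension equation and the support condition \emph{simultaneously} --- the paper itself remarks that it seems miraculous that the maximal root compatible with the support condition turns out to have $\sum_l b_ln_l=2|\Delta|-1$ --- and your proposal defers this to an unexecuted ``case-by-case examination.'' To close it you need two things. First, the observation that since $\Gamma/\Delta$ is abelian forces $\Delta\supseteq[\Gamma,\Gamma]$, the only cases with $\Gamma$ non-abelian and $\Delta\neq\Gamma$ are: $|\Gamma/\Delta|=2$ (any type); the index-$3$ commutator subgroup of the binary tetrahedral group (type $E_6^{(1)}$); and the index-$4$ commutator subgroup of the binary dihedral group of order $4m$ (type $D_{m+2}^{(1)}$) --- the binary octahedral group has abelianization $\ZZ/2$ and the binary icosahedral group is perfect, so they contribute nothing new. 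Second, an explicit $\beta$ in each of these cases. For $|\Gamma/\Delta|=2$ the highest root $\beta=\phi$ works uniformly: $\sum_{l\neq 0}n_l^2=|\Gamma|-1=2|\Delta|-1$, and exactly one mark-one vertex besides $0$ is $\Delta$-trivial, with coefficient $n_l=1$ in $\phi$. For the $E_6^{(1)}$ index-$3$ and $D^{(1)}$ index-$4$ families you must actually exhibit the root and verify both conditions (the paper does this with labelled Dynkin diagrams); without those verifications the proof is incomplete.
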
 
\begin{proof}
We begin with the case $W_n(\Gamma)$, in order to illustrate the ideas without the case-by-case considerations that will be necessary for the groups $W_n(\Gamma,\Delta)$. Using Corollary \ref{finite dimensional corollary} we choose a parameter $c \in \CC^I$ such that there is an irreducible $H_c(\Gamma,V)$-module $L$ with character $\mathrm{ch}(L)=n\delta+\phi$. By Lemma \ref{irreducibility lemma}, the representation $L^{\otimes n} \otimes \mathrm{det}$ of $H_c(\Gamma,V)^{\otimes n} \rtimes S_n$ is irreducible. By Lemma \ref{semi-invariants lemma}, the space of $W_n(\Gamma)$-invariants in this representation is one-dimensional, so it is a non-commutative fiber. By Lemma \ref{fiber lemma}, its associated graded module is a quotient ring of the zero-fiber ring, and since its dimension is
$$\mathrm{dim}_\CC \left(L^{\otimes n} \otimes \mathrm{det}\right)=\mathrm{dim}(L)^n=\left(n |\Gamma|+|\Gamma|-1 \right)^n=(g+1)^n,$$ we have completed the proof in this case. 

Now we handle the case of $W_n(\Gamma,\Delta)$ for $\Delta \neq \Gamma$. We may assume $\Gamma$ is not abelian, since in that case the results of \cite{Gri4} apply (except for the group $W_2(\Gamma,\{1\})=G(2,2,2)$, which is not irreducible, but one can check the theorem directly in that case). By arguing as above with Lemma \ref{irreducibility lemma}, Lemma \ref{semi-invariants lemma}, and Lemma \ref{normal subgroup fiber lemma}, we search for an irreducible $H_c(\Gamma,\CC^2)$-module $L$ with character as large as possible subject to the condition that 
\begin{equation} \label{dimension bound}
\mathrm{dim}(L^\chi) \leq n \quad \hbox{for all $\chi \in (\Gamma/\Delta)^\vee$, with equality for exactly one such $\chi \in (\Gamma/ \Delta)^\vee$.}
\end{equation}   Given a positive root of the form $(n-1) \delta+\alpha$ for a finite positive root $\alpha=\sum k_i \alpha_i$ with the property that $k_i=1$ for exactly one index $i$ corresponding to a linear character of $\Gamma / \Delta$, Corollary \ref{finite dimensional corollary} implies that there is an $L$ with $\mathrm{ch}(L)=(n-1)\delta+\alpha$. (We could also take $L$ with $\mathrm{ch}(L)=n \delta-\beta$ for a positive root $\beta$ such that $\beta_i=1$ for all $i$ corresponding to non-trivial characters of $\Gamma/\Delta$). For this $L$, its dimension is
$$\mathrm{dim}(L)=\sum_{i \neq 0} k_i n_i+(n-1)|\Gamma|.$$ Recalling from \eqref{g for W} that 
$$g=2N/n=(n-1)|\Gamma|+2(|\Delta|-1),$$ the equality $\mathrm{dim}(L)=g+1$ is equivalent to
\begin{equation} \label{alpha condition} \sum_{i \neq 0} k_i n_i=2 |\Delta| -1. \end{equation} It remains to show that in each case we can choose $\alpha$ a finite root such that \eqref{alpha condition} holds, and so that there is precisely one character $\chi$ of $\Gamma/\Delta$, corresponding to a vertex $i \in I$, such that $k_i=1$, implying \eqref{dimension bound} for $L$ with $\mathrm{ch}(L)=(n-1)\phi + \alpha$. (As it turns out, if $\alpha$ is a maximal positive root with this latter property, then we will observe that it satisfies the former). 

The simplest case is that in which $|\Gamma / \Delta |=2$. In this case the largest possible choice $\alpha=\phi$ works, so that $\mathrm{ch}(L)=(n-1) \delta+\phi$. The remaining cases are the commutator subgroup $\Delta$ of index $3$ in the binary tetrahedral group (of type $E_6^{(1)}$), and the commutator subgroup (which is of order $n$, or equivalenty index $4$) of the binary dihedral group of order $4n$. We now complete the proof of Theorem \ref{main} by specifying, in each case, a particular $\alpha$ with the property \eqref{alpha condition} and such that $k_i=1$ for precisely one vertex $i$ corresponding to a non-trivial character of $\Gamma/\Delta$. For the reader's convenience we also label the Dynkin diagram with the integers $n_i$ giving the coefficients of $\delta$ on the simple roots.

\let\dlabel=\mlabel

$$
\delta \ \text{for type} \ E_6^{(1)} \quad
\begin{tikzpicture}
\begin{scope}[start chain]
\foreach \dyi in {1,2,3,2,1} {
\dnode{\dyi}
}
\end{scope}
\begin{scope}[start chain=br going above]
\chainin(chain-3);
\dnodebr{2}
\dnodebr{1}
\end{scope}
\end{tikzpicture}
\qquad
\text{and} \ \alpha \ \text{is} \quad
\begin{tikzpicture}
\begin{scope}[start chain]
\foreach \dyi in {1,2,2,1,0} {
\dnode{\dyi}
}
\end{scope}
\begin{scope}[start chain=br going above]
\chainin(chain-3);
\dnodebr{1}
\dnodebr{0}
\end{scope}
\end{tikzpicture}
$$
\vspace{.5 in}
$$
\delta \ \text{for type} \ D_{n+2}^{(1)} \quad 
\begin{tikzpicture}
\begin{scope}[start chain]
\dnode{1}
\dnode{2}
\dnode{2}
\dydots
\dnode{2}
\dnode{1}
\end{scope}
\begin{scope}[start chain=br going above]
\chainin(chain-2);
\dnodebr{1};
\end{scope}
\begin{scope}[start chain=br2 going above]
\chainin(chain-5);
\dnodebr{1};
\end{scope}
\end{tikzpicture}
\qquad 
\text{and} \ \alpha \ \text{is} \quad
\begin{tikzpicture}
\begin{scope}[start chain]
\dnode{1}
\dnode{1}
\dnode{1}
\dydots
\dnode{1}
\dnode{0}
\end{scope}
\begin{scope}[start chain=br going above]
\chainin(chain-2);
\dnodebr{0};
\end{scope}
\begin{scope}[start chain=br2 going above]
\chainin(chain-5);
\dnodebr{0};
\end{scope}
\end{tikzpicture} 
$$ \end{proof} 

We note that in the proof, we have searched for a maximum-dimensional representation to which Lemma \ref{normal subgroup fiber lemma} applies; it seems miraculous to us that such a representation turns out to be of dimension exactly $(g+1)^n$. 

\subsection{Catalan numbers for $W_n(\Gamma,\Delta)$?} \label{Catalan numbers} As mentioned in the introduction, it is natural to wonder if there are reasonable product formulas for the dimensions of the spaces of $(\chi,\mathrm{det})$ semi-invariants appearing in our non-commutative fibers, as happens for complex reflection groups. Even though the statement of Lemma \ref{semi-invariants lemma} seems \emph{a priori} discouraging, as it contains direct sums, one may observe by a case-by-case analysis that in fact the dimension always factors into a product of $n$ linear functions of $n$ divided by $n!$. We have collected the results of this straightforward calculation in the next table. We note that they very nearly do not depend on the pair $(\Gamma,\Delta)$ except through the index $|\Gamma/\Delta|$ (though presumably their graded versions do).

\[
\begin{array}{c|c}
\hline
\text{The pair} \ (\Gamma,\Delta) \ \text{and the conditions on} \ \chi & \text{The dimension of} \ (L^{\otimes n} \otimes \mathrm{det})^{(\chi,\mathrm{det})} \\
\hline
 & \\
\Gamma=\Delta, \quad \chi \neq 1 & \frac{2n(2n-1)(2n-2) \cdots (n+2)(n+1)}{n!} \\
 & \\
 \hline
 & \\
 \Gamma=\Delta, \quad \chi = 1 &  \frac{(2n-1)(2n-2) \cdots (n+1)n}{n!} \\
& \\
\hline
& \\
|\Gamma:\Delta|=2,  \quad \chi|_\Delta \neq 1 &  \frac{(4n-2)(2n-2)(2n-3) \cdots (n+1)n}{n!} \\
 & \\
 \hline
  & \\
 |\Gamma:\Delta|=2,  \quad \chi|_{\Delta}=1 &  \frac{(3n-2)(2n-2)(2n-3) \cdots (n+1)n}{n!} \\
 & \\ 
 \hline 
 & \\
 \Gamma \ \text{of type} \ E_6^{(1)}, \ |\Gamma:\Delta|=3 & \frac{(4n-3)(2n-2)(2n-3) \cdots (n+1)n}{n!} \\
 & \\
 \hline
 & \\ 
  \Gamma \ \text{of type} \ D_{m+2}^{(1)}, \ |\Gamma:\Delta|=4 & \frac{(5n-4)(2n-2)(2n-3)\cdots(n+1)n}{n!} \\
  & \\ 
  \hline
\end{array}
\]

\vspace{.1 in}

The results for $(\chi,\mathrm{triv})$ semi-invariants in the cases not covered above are less interesting, but we present them here:

\vspace{.1 in}

\[
\begin{array}{c|c}
\hline
\text{The pair} \ (\Gamma,\Delta) \ \text{and the conditions on} \ \chi & \text{The dimension of} \ (L^{\otimes n} \otimes \mathrm{det})^{(\chi,\mathrm{triv})} \\
\hline
 & \\
\Gamma=\Delta, \quad \chi \neq 1 & n+1 \\
 & \\
 \hline
& \\
|\Gamma:\Delta|=2,  \quad \chi|_\Delta \neq 1 &  2 \\
 & \\
 \hline
\end{array}
\]

\section{Appendix: some numerology for quaternionic groups}

\subsection{Variants of the Coxeter number} Let $V$ be an $n$-dimensional $\HH$-vector space equipped with a positive definite Hermitian form $(\cdot,\cdot)$. We assume given a finite group $W$ of unitary transformations of $\HH$ such that $V$ is irreducible (as an $\HH$-linear representation of $W$). Let 
$$R=\{r \in W \ | \ \mathrm{codim}(\mathrm{fix}(r))=1 \}$$ be the set of reflections in $W$. Note that we do \emph{not} require here that $R$ generates $W$ (or even that $R \neq \emptyset$).

 We write
$$\mathcal{A}=\{\mathrm{fix}(r) \ | \ r \in R \}$$ for the arrangement of reflecting hyperplanes for elements of $R$, and define $N=|R|$ and $N^*=|\mathcal{A}|$. Finally, we define
$$g=2N/n, \quad h=(N+N^*)/n, \quad \text{and} \quad k=2N^*/n.$$ Evidently we have $g+k=2h$ and $g \geq h \geq k$, with both equalities occurring precisely when every $r \in R$ has order $2$ (thus in particular when the $W$-module $V$ has an $\RR$-form). When $W$ is a complex reflection group, it is traditional to refer to $h$ as the \emph{Coxeter number} of $W$. In this appendix we will prove:
\begin{theorem}
\label{coxeternumbers}
The numbers $g$, $h$, and $k$ are all integers.
\end{theorem}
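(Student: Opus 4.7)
Proof plan. The plan exploits the classical principle that any central element of $\ZZ[W]$ acts on an absolutely irreducible $\CC W$-module by an algebraic integer, which if additionally rational must lie in $\ZZ$.

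First I would descend to a complex setting. View $V$ as a $\CC$-vector space $V_\CC$ of dimension $2n$; right multiplication by $j\in\HH$ gives a $\CC$-antilinear, $W$-equivariant involution $J$ with $J^2 = -\mathrm{id}$. Pick any $\CC W$-irreducible submodule $U \subseteq V_\CC$: either $U = V_\CC$ (when $\End_{\HH W}(V) = \RR$, so $\dim_\CC U = 2n$) or $\dim_\CC U = n$ and $V_\CC = U \oplus JU$. Using $J$ and the faithfulness of $W$ on $V$, one checks that each reflection $r\in R$ with hyperplane $H$ acts on $U$ with $\CC$-fix-codimension $\dim U / n$, that the cyclic stabilizer $W_H$ injects into the induced group of the normal subspace, and that the non-trivial eigenvalues on that subspace sum to $-\dim U / n$ per hyperplane; the orthogonality identity $\sum_{w\in W_H}\chi_U(w) = e_H\,\dim U^{W_H}$ then gives $\sum_{w\in W_H}\chi_U(w) = e_H(n-1)\dim U/n$.

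Second I would prove $h \in \ZZ$ by taking the central element $T_R = \sum_{r\in R} r$. Its scalar action $\mu$ on $U$ satisfies $\mu\cdot\dim U = \sum_r \chi_U(r) = (\dim U/n)(N(n-1) - N^*)$, giving $\mu = N - (N+N^*)/n = N - h$. As a central element of $\ZZ[W]$ acting on an irreducible, $\mu$ is an algebraic integer; being manifestly rational, it lies in $\ZZ$, so $h = N - \mu \in \ZZ$.

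Third, since $g + k = 2h$ and $h \in \ZZ$, integrality of $g$ and $k$ reduces to $n \mid 2N^*$. I would apply the same machine to the family $T_m := \sum_{H\in\mathcal{A}} e_H^{\,m-1}\sum_{w\in W_H} w \in Z(\ZZ[W])$ for $m\geq 1$, each of which acts on $U$ by the scalar $(n-1)\sum_H e_H^m / n$, forcing $n\mid \sum_H e_H^m$ for all $m\geq 1$ (by coprimality of $n$ and $n-1$). Combined with $n\mid \sum_H e_H = N+N^*$ from the previous step, this yields strong power-sum congruences on the multiset $\{e_H\}_{H\in\mathcal{A}}$. The main obstacle is the final passage from these congruences to $n\mid 2N^*$: the pure power-sum data does not always isolate the counts $N_d^* = |\{H:e_H = d\}|$ when some $d$ shares a common factor with $n$. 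I expect to close this gap either by exploiting the $W$-orbit structure on $\mathcal{A}$ (governed by the setwise stabilizers $\widetilde{W}_H \supseteq W_H$) to sharpen the congruences, or by a short appeal to Cohen's classification of irreducible quaternionic reflection groups.
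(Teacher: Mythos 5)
Your argument for $h\in\ZZ$ is sound and is essentially the paper's own: the paper uses the central element $z=\sum_{r\in R}(1-r)$, observes that its scalar on an irreducible constituent is an algebraic integer, and computes the trace hyperplane-by-hyperplane using $\sum_{w\in W_H}\operatorname{tr}_{H^\perp}(w)=0$; your $T_R$ computation is the same up to the additive constant $N$, and your handling of the two cases ($V$ absolutely irreducible versus $V_\CC=U\oplus JU$) matches the paper's Case 1/Case 2 split. The problem is the second half. As you yourself flag, the congruences $n\mid\sum_{H\in\mathcal{A}}e_H^{\,m}$ for all $m\geq 1$ do \emph{not} determine $N^*$ or $2N^*$ modulo $n$: for instance the multiset $\{3,3,3\}$ with $n=9$ satisfies $\sum_H e_H^{\,m}=3^{m+1}\equiv 0\pmod 9$ for every $m\geq 1$, yet $2N^*=6\not\equiv 0\pmod 9$. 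So the central-element machine, by itself, cannot reach $k\in\ZZ$; you would have to rule out such configurations by some genuinely new input, and of your two proposed escapes, the appeal to Cohen's classification would work but is exactly what the paper is at pains to avoid (it notes that even the fact that parabolics are reflection groups is only known case-by-case in the quaternionic setting), while ``exploiting the orbit structure'' is not yet an argument.

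The paper closes this gap by a different, geometric mechanism (an adaptation of Orlik--Terao, Corollary 6.98) that you may want to adopt. Choose unit normals $\alpha_H$ and consider $f(v)=\sum_{H\in\mathcal{A}}\alpha_H(\alpha_H,v)$; this is $W$-equivariant, so by the same Schur argument it is the scalar $k/2$, whence $2\sum_{K\in\mathcal{A}}|(\alpha_K,\alpha_H)|^2=k$ for each fixed $H$. Grouping the $K$'s according to the codimension-two intersection $X=K\cap H\in\mathcal{A}^H$ and applying the same identity to the (possibly reducible) action of the pointwise stabilizer $W_X$ on $X^\perp$, one gets $\sum_{K\supseteq X}2|(\alpha_K,\alpha_H)|^2=|\{K\in\mathcal{A}:K\supseteq X\}|$, and summing over $X\in\mathcal{A}^H$ yields the manifestly integral formula $k=1+N^*-|\mathcal{A}^H|$. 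Then $g=2h-k\in\ZZ$. The key point your power-sum approach misses is that one must localize at a single hyperplane $H$ and count incidences, rather than work only with the conjugation-invariant global sums.
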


\subsection{Proof of Theorem \ref{coxeternumbers}}

We will prove the theorem after establishing a number of preparatory results. First, we prove that $h$ is an integer. 
\begin{lemma}
\label{h is an integer}
$h$ is an integer.
\end{lemma}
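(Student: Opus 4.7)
The plan is to show that $h$ is an integer by analyzing the scalar by which the central element $z = \sum_{r \in R} r \in Z(\CC W)$ acts on $V$.

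First, I would compute the complex character $\chi_V$ of $V$ (regarded as a $\CC$-representation of dimension $2n$) at a reflection $r$. Writing $q_r \in \mathrm{U}_1(\HH)$ for the unit quaternion giving the action of $r$ on the one-dimensional orthogonal complement of its reflecting hyperplane, and using that left multiplication by a quaternion $q$ on $\HH \cong \CC \oplus \CC j$ has complex trace $2\mathrm{Re}(q)$, one gets $\chi_V(r) = 2(n-1) + 2\mathrm{Re}(q_r)$. Grouping by hyperplane and using that every non-trivial finite subgroup $G$ of $\HH^\times$ acts freely on $\HH \setminus \{0\}$ — so that $\sum_{g \in G} g = 0$ in $\HH$ and therefore $\sum_{g \in G \setminus \{1\}} \mathrm{Re}(g) = -1$ — one obtains
\[
\sum_{r \in R} \chi_V(r) \;=\; 2(n-1) N - 2 N^*.
\]

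Next I would argue that $z$ acts on $V$ as a single complex scalar $\lambda$. By Schur's lemma, on each $\CC W$-irreducible constituent $U$ of $V$ the element $z$ acts by an algebraic-integer scalar $\lambda_U$, and since $R$ is closed under $r \mapsto r^{-1}$ and $\chi_U(r^{-1}) = \overline{\chi_U(r)}$, the sum $\sum_{r \in R} \chi_U(r)$ is real, forcing $\lambda_U \in \RR$. A short analysis of the antilinear $W$-equivariant map $J \colon V \to V$ given by right multiplication by $j$ (with $J^2 = -1$) shows that an $\HH$-irreducible $V$ has at most two distinct $\CC W$-isotypic components, and when two occur they form a complex-conjugate pair $U, \overline U$; in that situation $\lambda_{\overline U} = \overline{\lambda_U} = \lambda_U$ by the reality just noted. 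Hence $z$ acts on $V$ as a single scalar $\lambda$, and equating the trace $2n \lambda$ to $2(n-1)N - 2N^*$ yields
\[
\lambda \;=\; \frac{(n-1)N - N^*}{n}.
\]

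To conclude, $\lambda$ is a real algebraic integer that equals a rational number, so $\lambda \in \ZZ$; reducing modulo $n$ gives $n \mid N + N^*$ and therefore $h = (N+N^*)/n \in \ZZ$. The main subtlety I expect is verifying that $z$ acts as a single scalar on the whole of $V$, rather than by different scalars on different $\CC W$-isotypic pieces; this combines the $r \leftrightarrow r^{-1}$ symmetry (ensuring reality of the $\lambda_U$) with the classification of the $\CC W$-structure of an $\HH$-irreducible representation via the quaternionic structure map $J$.
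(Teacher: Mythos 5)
Your proposal is correct and follows essentially the same route as the paper: both use that the sum over the reflections is a central element of $\CC W$ acting by an algebraic-integer scalar (Schur's lemma plus integrality of class sums), and both compute its trace by grouping reflections according to their hyperplane and using that a nontrivial finite subgroup $G \leq \HH^\times$ satisfies $\sum_{g \in G} g = 0$ (the paper phrases this as $\sum_{r \in W_H} r$ annihilating $H^\perp$, and works with $\sum_{r\in R}(1-r)$ so that the scalar is $h$ itself rather than $N-h$). Your handling of the case where $V$ is $\CC W$-reducible, via the reality of $\lambda_U$ and the antilinear structure map $J$, is a more explicit version of the paper's terse ``$V=V_0\otimes_\CC\HH$ and the same argument applies,'' and is a welcome level of detail rather than a deviation.
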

\begin{proof}
Let $z=\sum_{r\in R} 1-r \in \ZZ(\CC W)$. 
\textbf{Case 1:} In the case when $V\in \operatorname{Irr}\CC W$ is irreducible over $\CC$, it follows from Schur's lemma that $z$ acts on $V$ by scalar $c_V\in \CC$. Moreover, since $z$ is an integer-linear combination of class sums, $c_V$ is an algebraic integer.

By definition of $c_V$,
$$2nc_V=\operatorname{tr}_V(z)=\sum\limits_{r\in R}\operatorname{tr}_V(1-r)=\sum\limits_{H\in \mathcal{A}}\sum\limits_{r\in W_H}\operatorname{tr}_V(1-r)$$
where $W_H=\{r\in W | r(p)=p ,\, \forall p\in H\}$. Let $H^{\bot}$ be the orthogonal complement of $H$ in $V$, so that $V=H\oplus H^{\bot}$. Noting that, for $r \in W_H$, $1-r$ acts by zero on $H$ while $\sum_{r \in W_H} r$ acts by zero on$H^{\bot}$,
$$2nc_V=\sum\limits_{H\in \mathcal{A}}\sum\limits_{r\in W_H}\operatorname{tr}_{H^{\bot}}(1)=\sum\limits_{H\in \mathcal{A}}\sum\limits_{r\in W_H}2=\sum\limits_{H\in \mathcal{A}}2|W_H|=2(N+N^*) \, .$$
This implies that $h=c_V$ is an algebraic integer; since it is evidently a rational number it is an integer. 

\textbf{Case 2:} If $V$ is not irreducible as a $\CC W$-module, then it is of the form $V=V_0 \otimes_\CC \HH$ for some irreducible $\CC W$-module $V_0$, and the same argument applies. This shows that $h$ is an integer in either case.
\end{proof}

Next we prove that $k$ is an integer (which, together with the preceding shows that $g$ is an integer). This follows the proof of Corollary 6.98 of \cite{OrTe}, modified in order to apply to the quaternionic case and in order to avoid their appeal to the theorem that parabolic subgroups of reflection groups are reflection groups (this is known thanks to \cite{BST} for quaternionic groups, but via a case-by-case check). Fix $H\in \mathcal{A}$ and set $\mathcal{A}^H=\{ H\cap K |K\in \mathcal{A}\setminus \{H\} \}$. In order to prove that $k\in \ZZ$, we will in fact show that $|\mathcal{A}^H|=N^*+1-k$. For each $H \in \cA$, we fix $\alpha_H\in V$ such that 
\begin{enumerate}
    \item $H=\{v\in V |(\alpha_H, v)=0\}$,
    \item $(\alpha_H, \alpha_H)=1$.
\end{enumerate} Thus $\alpha_H$ is well-defined up to multiplication by a quaternion of norm $1$.
We will consider the function $f:V\rightarrow V$ defined by $v\mapsto \sum\limits_{H\in \mathcal{A}}\alpha_H(\alpha_H, v)$.

\begin{lemma} The function $f$ satisfies
    $f(v)=\frac{k}{2}v, \, \forall v\in V$.
\end{lemma}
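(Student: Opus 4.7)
The plan is to verify that $f$ is right-$\HH$-linear, Hermitian with respect to $(\cdot,\cdot)$, and $W$-equivariant; the quaternionic spectral theorem together with $\HH W$-irreducibility of $V$ will then force $f=c\cdot\mathrm{id}_V$ for some $c\in\RR$, and a trace calculation will give $c=k/2$. Right-$\HH$-linearity is immediate from linearity of $(\cdot,\cdot)$ in the second slot, and the Hermitian identity $(f(v),w)=\sum_H\overline{(\alpha_H,v)}(\alpha_H,w)=(v,f(w))$ is term-by-term. For $W$-equivariance, conjugation by $w\in W$ sends reflections to reflections, so $W$ permutes $\cA$; since $\alpha_H$ is defined only up to a unit quaternion, for each pair $(w,H)$ there exists $\lambda=\lambda_{w,H}\in\HH$ of norm one with $w\alpha_H=\alpha_{wH}\lambda$. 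Using $W$-invariance of $(\cdot,\cdot)$ together with $(\alpha_{wH}\lambda,u)=\bar\lambda(\alpha_{wH},u)$,
$$wf(v)=\sum_{H\in\cA}(w\alpha_H)(w\alpha_H,wv)=\sum_{H\in\cA}\alpha_{wH}\lambda\bar\lambda(\alpha_{wH},wv)=\sum_{H\in\cA}\alpha_{wH}(\alpha_{wH},wv)=f(wv),$$
after reindexing $H\mapsto wH$; the unit-quaternion factors cancel precisely because $\lambda\bar\lambda=1$.

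Since $f$ is Hermitian, the quaternionic spectral theorem diagonalizes $f$ as an $\HH$-linear operator with real eigenvalues. Each eigenspace is a right-$\HH$-subspace, because real scalars are central in $\HH$, and is $W$-stable by equivariance; the $\HH W$-irreducibility of $V$ then forces a single eigenspace, so $f=c\cdot\mathrm{id}_V$ for some $c\in\RR$.

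To pin down $c$, choose an $\HH$-orthonormal basis $e_1,\dots,e_n$ of $V$. The expansion $\alpha_H=\sum_i e_i(e_i,\alpha_H)$ gives $(\alpha_H,\alpha_H)=\sum_i|(\alpha_H,e_i)|^2=1$, whence
$$cn=\sum_{i=1}^n(f(e_i),e_i)=\sum_{H\in\cA}\sum_{i=1}^n|(\alpha_H,e_i)|^2=\sum_{H\in\cA}1=N^*,$$
so $c=N^*/n=k/2$, as required. The only delicate point in the argument is the cancellation of the unit-quaternion ambiguity in the $W$-equivariance step; conceptually, one is observing that the summand $v\mapsto\alpha_H(\alpha_H,v)$ is the orthogonal projection onto the quaternionic line $\alpha_H\HH$ and hence depends only on $H$, so the family of such projections is manifestly a $W$-equivariantly indexed family over $\cA$ regardless of how the $\alpha_H$ are normalized.
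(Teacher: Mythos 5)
Your proof is correct, and it reaches the conclusion by a somewhat different mechanism than the paper's. Both arguments share the same skeleton --- show that $f$ is a $W$-equivariant $\HH$-linear scalar and then evaluate the scalar via the trace identity $\sum_i(f(e_i),e_i)=\sum_{H\in\cA}(\alpha_H,\alpha_H)=N^*$ --- but the paper obtains scalarity from Schur's lemma over $\CC$, which forces a two-case analysis according to whether $V$ is irreducible as a $\CC W$-module or of the form $V_0\otimes_\CC\HH$. You instead observe that $f$ is self-adjoint for the quaternionic Hermitian form and invoke the quaternionic spectral theorem, so that the eigenspaces are $W$-stable right $\HH$-subspaces and irreducibility over $\HH W$ (which is the hypothesis actually given) yields $f=c\cdot\mathrm{id}_V$ with $c$ real in one stroke; this buys a uniform argument with no case division and no need to identify the endomorphism ring of $V$. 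You are also more explicit than the paper on the one genuinely delicate point, namely the $W$-equivariance of $f$ despite the unit-quaternion ambiguity in each $\alpha_H$: your remark that the summand $v\mapsto\alpha_H(\alpha_H,v)$ is the orthogonal projection onto the quaternionic line $\alpha_H\HH$, hence depends only on $H$, is exactly the right way to see the cancellation $\lambda\bar\lambda=1$. The only caveat is cosmetic: your displayed computation tacitly uses that $w$ acts right-$\HH$-linearly and that the form is conjugate-linear in its first slot with the conjugated scalar emerging on the left, both of which hold for the paper's conventions.
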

\begin{proof}
\textbf{Case 1:} In the case when $V\in \operatorname{Irr}\CC W$ is irreducible over $\CC$. It follows from Schur's lemma that $f$ acts on $V$ by a scalar $c\in \CC$. By fixing an orthonormal $\HH$-basis $e_1,\dots,e_n$ of $V$ and computing $\sum_{i=1}^n (e_i,f(e_i))$ one finds that this scalar is $k/2$.

\textbf{Case 2:} In this case $V=V_0 \otimes_\CC \HH$ for an irreducible $\CC W$-module $V_0$, and the same argument applies upon working in $V_0$.
\end{proof}

\begin{corollary}
    For $H\in \mathcal{A}$, we have $$2\sum\limits_{K\in \mathcal{A}}|(\alpha_K, \alpha_H)|^2=k.$$  
\end{corollary}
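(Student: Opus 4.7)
The plan is to apply the lemma to the specific vector $v = \alpha_H$ and compute the inner product $(\alpha_H, f(\alpha_H))$ in two different ways; the claim then drops out immediately.

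On the one hand, the lemma gives $f(\alpha_H) = \frac{k}{2} \alpha_H$, and since $(\alpha_H, \alpha_H) = 1$ by our normalization, we get
$$(\alpha_H, f(\alpha_H)) = \tfrac{k}{2}.$$
On the other hand, using the definition of $f$ together with the $\HH$-linearity of $(\cdot,\cdot)$ in its second argument and the symmetry $(u,v) = \overline{(v,u)}$, we compute
$$(\alpha_H, f(\alpha_H)) = \sum_{K \in \mathcal{A}} \bigl(\alpha_H,\, \alpha_K (\alpha_K, \alpha_H)\bigr) = \sum_{K \in \mathcal{A}} (\alpha_H, \alpha_K)(\alpha_K, \alpha_H) = \sum_{K \in \mathcal{A}} |(\alpha_K, \alpha_H)|^2.$$
Setting these two evaluations equal yields the desired identity $k = 2\sum_{K \in \mathcal{A}} |(\alpha_K, \alpha_H)|^2$.

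The only point requiring any care is the bookkeeping with the quaternionic Hermitian form: one must use that scalars pulled out of the second slot come out on the right while those pulled from the first slot are conjugated, and then that $(\alpha_K,\alpha_H) = \overline{(\alpha_H,\alpha_K)}$ so that the product $(\alpha_H,\alpha_K)(\alpha_K,\alpha_H)$ really is the squared norm $|(\alpha_H,\alpha_K)|^2 \in \RR_{\geq 0}$. Since $\alpha_H$ is only defined up to right multiplication by a unit quaternion, one should also remark that the quantity $|(\alpha_K,\alpha_H)|^2$ is invariant under such rescalings of either $\alpha_H$ or $\alpha_K$, so the sum on the right side is well-defined; this is automatic from the absolute value. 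No further input beyond the previous lemma is needed, so there is no real obstacle — the corollary is essentially a one-line polarization of the scalar identity $f = \frac{k}{2} \cdot \mathrm{id}$.
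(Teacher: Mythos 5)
Your proof is correct and is essentially identical to the paper's: both apply the identity $f=\frac{k}{2}\cdot\mathrm{id}$ at $v=\alpha_H$ and expand $(\alpha_H,f(\alpha_H))$ using the normalization $(\alpha_H,\alpha_H)=1$. The extra remarks on the quaternionic conventions (right-linearity in the second slot, $z\overline{z}=\overline{z}z=|z|^2$) are accurate and merely make explicit what the paper leaves implicit.
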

\begin{proof} This follows from
    $$k/2 = (\alpha_H, f(\alpha_H))=\sum\limits_{K\in \mathcal{A}}(\alpha_H, \alpha_K(\alpha_K, \alpha_H)).$$
\end{proof}

\begin{lemma}
    For $X\in \mathcal{A}^H$, $|\{K\in \mathcal{A} \, | X\subseteq K \}| = 2\sum\limits_{\substack{X\subseteq K \\ K\in \mathcal{A}}}|(\alpha_k, \alpha_H)|^2$.
\end{lemma}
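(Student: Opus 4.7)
The plan is to recast the identity as a statement about an endomorphism of the $2$-dimensional quaternionic subspace $V_X := X^\perp$. Define $f_X \colon V_X \to V_X$ by
$$f_X(v) = \sum_{K \in \mathcal{A}_X} \alpha_K (\alpha_K, v), \qquad \mathcal{A}_X = \{K \in \mathcal{A} \mid X \subseteq K\}.$$
Since $X \subseteq K$ implies $\alpha_K \in V_X$, this map does take $V_X$ into itself, and $\alpha_H \in V_X$ as well. The tautology $(\alpha_H, f_X(\alpha_H)) = \sum_{K \in \mathcal{A}_X} |(\alpha_K, \alpha_H)|^2$ reduces the lemma to showing that $f_X$ acts on $V_X$ as a real scalar, and the value of that scalar is then forced to be $|\mathcal{A}_X|/2$: if $e_1, e_2$ is an orthonormal $\HH$-basis of $V_X$ then $\sum_i (e_i, f_X e_i) = \sum_K \|\alpha_K\|^2 = |\mathcal{A}_X|$, whereas a real scalar $\lambda I$ would yield $2\lambda$.

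To show $f_X$ is scalar, I would introduce the subgroup $W_X^R = \langle r \in R \mid X \subseteq \mathrm{fix}(r) \rangle$, which acts on $V_X$ and, by essentially the same unitarity computation given for the operator $f$, commutes with $f_X$ (it permutes $\mathcal{A}_X$ and each $w \in W_X^R$ sends $\alpha_K$ to a unit-norm $\HH$-multiple of $\alpha_{w(K)}$). If $V_X$ is irreducible as an $\HH W_X^R$-module then Schur's lemma identifies $\mathrm{End}_{\HH W_X^R}(V_X)$ with a real division algebra, and the self-adjointness $(u, f_X v) = (f_X u, v)$ forces the corresponding element $q$ to satisfy $q = \overline{q}$, hence $q \in \RR$. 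If instead $V_X$ is reducible then, by unitarity, $V_X = U_1 \oplus U_2$ as an orthogonal direct sum of one-dimensional $\HH W_X^R$-submodules. For any reflection $r \in W_X^R$ with hyperplane $K$, however, $r|_{V_X}$ has two distinct eigenvalues (namely $1$ on $K \cap V_X$ and $\zeta_r \neq 1$ on $\alpha_K \HH$), and a short direct computation shows these are its \emph{only} invariant $\HH$-lines; so $\{U_1, U_2\} = \{K \cap V_X, \alpha_K \HH\}$ for every such $r$. Applying this simultaneously to two reflections in $W_X^R$ with distinct hyperplanes $K, K'$ forces $\alpha_K \HH$ and $\alpha_{K'} \HH$ to exhaust $\{U_1, U_2\}$, so $|\mathcal{A}_X| \leq 2$ and the corresponding normal vectors are orthogonal. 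Combined with $|\mathcal{A}_X| \geq 2$ from the hypothesis $X \in \mathcal{A}^H$, we conclude $|\mathcal{A}_X| = 2$ and $\alpha_H \perp \alpha_K$ for the unique $K \neq H$ in $\mathcal{A}_X$, whence $\sum_{K' \in \mathcal{A}_X} |(\alpha_{K'}, \alpha_H)|^2 = 1 + 0 = |\mathcal{A}_X|/2$ by inspection.

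The main obstacle is the reducible case: the trick is that even when $V_X$ is reducible under $W_X^R$ (so Schur's lemma does not directly produce a scalar), the rigidity of the eigenline structure of a single reflection forces $\mathcal{A}_X$ to be so small that the identity can be verified by hand. This is precisely the device that sidesteps the (case-by-case) theorem that pointwise stabilizers of flats in a quaternionic reflection group are themselves generated by reflections, which would otherwise supply the needed irreducibility through a Steinberg-type argument.
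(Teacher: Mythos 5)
Your proof is correct and follows essentially the same route as the paper: the paper likewise splits on whether the pointwise stabilizer of $X$ acts $\HH$-(ir)reducibly on $X^\perp$, disposes of the reducible case by the same forced-eigenline/orthogonality argument showing $|\{K \supseteq X\}|=2$ with orthogonal normals, and in the irreducible case simply applies its earlier Schur-plus-trace corollary to that stabilizer acting on $X^\perp$ --- which is exactly the computation you inline for $f_X$. The only cosmetic differences are that the paper uses the full pointwise stabilizer $W_X$ rather than the subgroup generated by reflections, and that your step ``a self-adjoint element of the commutant is real'' is most cleanly justified by noting that $f_X$ is self-adjoint for the Hermitian form, hence diagonalizable with real eigenvalues and group-stable eigenspaces, so irreducibility forces a single real eigenvalue.
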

\begin{proof}
Let $$W_X=\{w \in W \ | \ w(x)=x \ \hbox{for all $x \in X$} \}$$ be the subgroup of $W$ that fixes $X$ pointwise. Thus $W_X$ acts on the orthogonal complement $X^\perp$, and the reflecting hyperplanes for $W_X$ acting on $X^\perp$ are precisely those intersections $K \cap X^\perp$ for which $K \in \cA$ satisfies $X \subseteq K$.

\textbf{Case 1:} We assume first that $W_X$ acts $\HH$-reducibly on $X^\perp$. Thus $X^\perp$ is the direct sum of two one-dimensional $W_X$-stable subspaces, and for each $K \in \cA$ with $K \supseteq X$ we must have $\alpha_K$ in one of these subspaces or the other. It follows that there are exactly two $K \in \cA$ with $X \subseteq K$ (one of which is $H$) and we have $(\alpha_H,\alpha_K)=0$. This establishes the lemma in this case. 

\textbf{Case 2:} Now assume $W_X$ acts $\HH$-irreducibly on $X^\perp$. In this case, we may apply the previous corollary, which proves the lemma in this case.
\end{proof}

\begin{proof} (of Theorem \ref{coxeternumbers})
By Lemma \ref{h is an integer}, $h\in \ZZ$. We next prove that $k$ is an integer. We may assume $\cA \neq \emptyset$; we fix $H \in \cA$.

By making use of the preceding results we calculate:
\begin{eqnarray*}
   k&=&2\sum\limits_{K\in \mathcal{A}}|(\alpha_K, \alpha_H)|^2=2 + 2\sum\limits_{K\in \mathcal{A}\setminus \{H\}}|(\alpha_K, \alpha_H)|^2= 2 + \sum\limits_{X\in \mathcal{A}^H}\bigl(-1 + \sum\limits_{\substack{X\subseteq K \\K\in \mathcal{A}}}|(\alpha_K, \alpha_H)|^2\bigr)\\
    &=&2-2|\mathcal{A}^H|+\sum\limits_{X\in \mathcal{A}^H}|\{K\in \mathcal{A} |\, X\subseteq K\}|=2-2|\mathcal{A}^H|+|\mathcal{A}^H| + \sum\limits_{X\in \mathcal{A}^H}|\{ K\in \mathcal{A} |\, K\cap H=X \}|
   \\
   &=&2-|\mathcal{A}^H|+|\mathcal{A}|-1=1+N^*-|\mathcal{A}^H|.
\end{eqnarray*}
\end{proof}
\bibliographystyle{a                      
                       msplain}
\def\cprime{$'$} \def\cprime{$'$}

\end{document}